 \definecolor{lin}{RGB}{240,0,0}	
 \definecolor{paleblue}{RGB}{0,9,255}
\newcommand{\map}[3]{#1: #2 \rightarrow #3}
\newcommand{\setdef}[2]{\{#1 \; | \; #2\}}
\newcommand{\st}{\ensuremath{\operatorname{s.t.}}}
\newcommand{\real}{\ensuremath{\mathbb{R}}}
\newcommand{\prob}{\ensuremath{\mathbb{P}}}
\newcommand{\realnonnegative}{\ensuremath{\mathbb{R}}_{\ge 0}}
\newcommand{\until}[1]{\{0,1,\dots, #1\}}
\newcommand{\untilone}[1]{\{1,\dots, #1\}}
\newcommand{\subscr}[2]{#1_{\textup{#2}}}
\newcommand{\supscr}[2]{#1^{\textup{#2}}}
\newcommand{\vect}[1]{\mathbf{#1}}
\newcommand{\vectorones}[1]{\vect{1}_{#1}}
\newcommand{\vectorzeros}[1]{\vect{0}_{#1}}
\newcommand{\Norm}[1]{\|#1\|}
\newcommand{\trans}[1]{{#1}^\top}
\newcommand{\obj}{\ensuremath{\operatorname{obj}}}
\newcommand{\dom}{\ensuremath{\operatorname{dom}}}
\newcommand{\UB}{\ensuremath{\operatorname{UB}}}
\newcommand{\LB}{\ensuremath{\operatorname{LB}}}
\newcommand{\graph}{\mathcal{G}}
\newcommand{\nodes}{\mathcal{V}}
\newcommand{\edges}{\mathcal{E}}
\DeclareMathOperator*{\argmax}{argmax}
\newtheoremstyle{breaknote}%
  {\item{\theorem@headerfont
          ##1\ ##2\theorem@separator}\hskip\labelsep\relax}%
  {\item{\theorem@headerfont
          ##1\ ##2\ (##3)\theorem@separator}\hskip\labelsep\relax}
\newcommand{\leqnomode}{\tagsleft@true}
\newcommand{\reqnomode}{\tagsleft@false}					
\theoremstyle{breaknote}
\newtheorem{assumption}{Assumption}[section]
\newtheorem{lemma}{Lemma}[section]
\newtheorem{proposition}{Proposition}[section]
\newtheorem{remark}{Remark}[section]
\title{Data-driven Variable Speed Limit Design for Highways \\via
  Distributionally Robust Optimization}
\author{D. Li, D. Fooladivanda and S. Mart{\'\i}nez}
\begin{document}

\maketitle

\begin{abstract}
  This paper introduces an optimization problem (P) and a solution
  strategy to design variable-speed-limit controls for a highway that
  is subject to traffic congestion and uncertain vehicle arrival and
  departure.  By employing a finite data-set of samples of the
  uncertain variables, we aim to find a data-driven solution that has
  a guaranteed out-of-sample performance. In principle, such
  formulation leads to an intractable problem (P) as the distribution
  of the uncertainty variable is unknown. By adopting a
  distributionally robust optimization approach, this work presents a
  tractable reformulation of (P) and an efficient algorithm that
  provides a suboptimal solution that retains the out-of-sample
  performance guarantee. A simulation illustrates the effectiveness of
  this method.
\end{abstract}

\section{Introduction}
Transportation networks constitute one of the most critical
infrastructure sectors today, with a major impact on the economics,
security, public health, and safety of a community. In these networks,
the accessibility of routes between increasingly-larger geographical
locations is highly dependent on the network connectivity as well as
on the traffic congestion on the available roads.  New advances on
smart infrastructure, computation, and communication make possible the
collection of real-time traffic data as well as the implementation of
novel control policies that can alleviate traffic problems. Motivated
by this, we consider a problem of traffic congestion reduction via
variable speed-limits and the assimilation of traffic data.

\textit{Literature Review:} Several congestion control schemes have been proposed in the
literature with the goal of mitigating congestion, such as ramp
metering control~\cite{SM-RC-LJ:17,BF-HD-GG-BA:17}, lane
assignment~\cite{RC-BLN-PM:17,GE-MS:18}, optimal
control~\cite{JS-SK:18,JA-PI-PM-DS:17}, logic-based
control~\cite{CS-GEA-AM_BC:16} and many other innovative control
strategies~\cite{SM-LJ:18,WC-BA-MA:18,CG-LE-SK:16}.  More recently,
variable speed limits have been proposed as an effective congestion
control mechanism in
transportation~\cite{FS-IM-MS-MM:17,AYY-MR-MAD:17,HY-HA-etl:17,IP-MP-IS:18}. Such
works exploit the Cell Transmission Model to capture the deterministic
distribution of traffic densities along a
road~\cite{CFD:94,SM-GP-GA-LJ:15}. In practice, these approaches may
be limited, due to the uncertainty on traffic density subject to
unknown actions by various drivers as well as vehicle arrival and
departure. However, the wide availability of data in real
time~\cite{WD-BS-etl:10,JCH-DBW-RH-XJB-QJ-AMB:10} can help reduce this
uncertainty and opens the way to the application of novel data-driven
optimization methods for control.  In this way, we consider here a
distributionally robust optimization (DRO)
framework~\cite{AC-JC:17,DL-SM:18-extended,RG-AJK:16,PME-DK:17} for
data assimilation. DRO uses finite data to make decisions with
desirable out-of-sample performance guarantees, and as such, it paves
the way for real-time decisions to dynamical transportation systems.
Here, we aim to answer two questions; that is, 1) what role variable
speed limits play in congestion, and 2) find an efficient approach for the
computation of data-driven variable speed limit controls with
performance guarantees.

\textit{Statement of Contributions:} In this work, we consider a
highway divided into equal-size segments. Vehicle arrival and
departure into each segment represent inflow and outflow disturbances
to traffic, and we model these disturbances as unknown stochastic
processes. Further, we assume that finite realizations of such random
variables can be acquired in real time and that a transportation
network operator can prescribe variable speed limits to control
congestion on each of the segments. In this setting, we propose a
novel data-driven variable-speed-limit control to limit congestion and
maximize the throughput of the road.
To do this, we first leverage the effect of variable speed limits to
limit traffic congestion. This is achieved by exploiting
approximations of the well-known Fundamental Diagram for various speed
limits. To ensure the performance of a data-driven solution with a
given confidence, we generalize the DRO framework in the literature to
handle the dynamical system constraints of our control
problem. 
Specifically, we define ambiguity sets, or the sets of system
trajectory distributions, to contain the distribution of the true
system trajectory with high probability. The proposed DRO approach
then allows us to obtain a set of speed limits with an out-of-sample
performance bound defined as the optimal objective value of a
worst-case optimization problem over the ambiguity set. As the
resulting problem is infinite-dimensional and intractable, we further
obtain an equivalent reformulation that reduces it into a
finite-dimensional problem.
Still the resulting problem is nonconvex, and our third contribution
provides an integer-solution search algorithm to find feasible
data-driven variable speed limits. This algorithm is based on the
decomposition of the nonconvex problem into mix-integer linear programs
and, as such, has certain convergence properties
guarantees. 
We establish that this solution procedure guarantees a feasible
solution with the out-of-sample performance guarantee with high
probability. We finally illustrate the performance of the proposed
algorithm in simulation.

\section{Preliminaries}\label{sec:pre}
Let $\real^{m\times n}$ denote the $m \times n$-dimensional real
vector space, and let the shorthand notations $\vectorones{m}$ and
$\vectorzeros{m}$ denote the column vector $\trans{(1,\cdots,1)} \in
\real^m$ and $\trans{(0,\cdots,0)} \in \real^m$, respectively.  Any
letter $x$ may have appended the following indices and arguments: it
may have the subscript $x_e$, with $e \in \mathbb{N}$, the argument
$x_e(t)$, $t \in \real$, and further a superscript $l\in \mathbb{N}$
as in $x^{(l)}_e(t)$. We assume that the dimension of the letter with
the most indexes belongs to $\real$, while their removal increases its
dimension. 
In this way, given $x^{(l)}_e(t) \in \real$, for several $e, t,$ and
$l$, we denote $x^{(l)}(t):=(x^{(l)}_1(t), x^{(l)}_2(t),\ldots)$, then
further $x^{(l)}:=(x^{(l)}(1), x^{(l)}(2),\ldots)$, and finally
$x:=(x^{(1)}, x^{(2)},\ldots)$.  The inner and component-wise products
of any two vectors $x,y \in \real^m$ are denoted by $\left\langle
  x,\;y\right\rangle$ and $x \circ y$, respectively. In addition, the
Kronecker product of any two vectors $x, y$ with arbitrary dimension
is denoted by $x \otimes y$. The $1$-norm of the vector ${x} \in
\real^m$ is denoted by $\Norm{{x}}$ and its dual norm is denoted by
$\Norm{{x}}_{\star}:=\sup_{\Norm{z}\leq 1}\left\langle
  z,\;x\right\rangle$. We have that $\|x\|_{**} = \|x\|$.

Let $X\subseteq \real^n$ be a subspace and let $X^{\star}$ denote the
dual space of $X$. For each $x \in X$, the dual $x^\star \in X^\star$
is defined as $x^\star(y) = \langle x, y \rangle$, for any $y\in
X$. Let $\map{f}{X}{\real}$ be a function on $X$ and we define its domain
of interest by $\dom f:=\setdef{x\in X}{-\infty<f(x)<+\infty}$. We say
$f$ is convex on $X$, if $f(\lambda x+(1-\lambda)y) \leq \lambda
f(x)+(1-\lambda)f(y)$ for all $x,y\in X$ and $0 \leq \lambda\leq
1$. 
We call $f$ lower semi-continuous on $X$, if $f(x) \leq
\liminf_{y\rightarrow x}f(y)$ for all $x \in X$. A function $f$ is
lower semi-continuous on $X$ if and only if its sublevel sets
$\setdef{x\in X}{f(x)\leq \gamma}$ are closed for each $\gamma \in
\real$.
We denote the convex conjugate of $f$ by
$\map{f^{\star}}{X}{\real\cup \{+\infty \}}$, which is defined
as $f^{\star}(x):=\sup_{y\in X} \left\langle
  x,\;y\right\rangle
-f(y)$. 
Let $f$ and $g$ denote two functions on $X$. The infimal convolution
of $f$ and $g$ is defined as $(f\square g)(x):= \inf_{y\in X}
f(x-y)+g(y)$.

Let $A$ be a set in $X$. We use the notion $\map{\chi_{A}}{X}{\real
  \cup \{+\infty \}}$ to denote the characteristic function of $A$,
i.e., $\chi_{A}(x)$ is equal to $0$ iff $x\in A$ and $+\infty$
otherwise. The support function of $A$ is defined as
$\map{\sigma_{A}}{X}{\real}$,
$\sigma_{A}(x):=\sup_{y\in A}\left\langle
  x,\;y\right\rangle$. It can be verified that
$\sigma_{A}(x)=[\chi_{A}]^{\star}(x)$ for all
$x\in X$, and $\chi_{A}$ is lower semi-continuous if
and only if $A$ is closed.
Let $f$ and $g$ denote two convex and lower semi-continuous functions
on $X$ with $\dom f \cap \dom g \ne \varnothing$. The conjugate of
$f+g$ has the following property:
$\left(f+g\right)^{\star}=(f^{\star}\square g^{\star})$. For more
information and details of these properties we refer readers
to~\cite[Theorem 11.23(a), Dual
operations]{RTR-RJBW:98} and references therein. 

\section{Problem Statement}\label{sec:ProbStat}
In this section, we first introduce the traffic model that we consider
in this paper, and then we propose a stochastic optimal control
framework to solve the proposed variable-speed limit design problem.

\subsection{Transportation System Model}
Consider a one-way road of length $L$, and divide the road into $n$
segments of equal size $L/n$. The road can be modeled as a chain
directed graph $\graph=(\nodes,\edges)$ with set of nodes
$\nodes:=\{0,1,\ldots,n\}$ and set of edges
$\edges:=\{(0,1),\cdots,(n-1,n)\}$. Each edge $e\in \edges$
corresponds to a road segment and each node $v\in \nodes$ corresponds
to a link between two road segments. We call node $0$ the source node,
and call node $n$ the sink node. Further, a node $v \in \nodes$ is
called an arrival node if there exists a non-zero inflow at node
$v$. Similarly, a node $v$ is called a departure node if there exists
a non-zero outflow at node $v$. Let $\nodes_{A}$ and $\nodes_{D}$
denote the set of arrival nodes and departure nodes, respectively. By
convention, we have node $0 \in \nodes_{A}$ and node $n \in
\nodes_{D}$.

Let us consider a time horizon of length $Q$, and assume that time is
divided into time slots of size $\delta$. Let $\mathcal{T}:=\until{T}$
denote the set of time slots, where $T=Q/\delta$. We denote by
$\overline{u}_e$, $\overline{\rho}_e$, and $\overline{f}_e$ the
maximal free flow speed, the jam density, and the capacity of edge
$e$, respectively.  Let $u_e \in \left[0, \overline{u}_e\right]$
denote the speed limit of the vehicles on edge $e$ and we consider
$u_e$ to be constant over the set of time slots $\mathcal{T}$. At each
time slot $t$, we denote by $\rho_e(t) \in [0, \overline{\rho}_e]$ the
density of the vehicles on edge $e$, and
the allowable flow rate on each edge $e$ by $f_e(t) \in [0,
\overline{f}_e]$, depending on the density $\rho_e(t)$ and speed limit
$u_e$ of the segment. Given a (constant) speed limit $u_e$, the
relationship between $f_e(t)$ and $\rho_e(t)$ can be characterized by
the fundamental diagram of edge $e\in \edges$, (e.g.,
see~\cite{NG-FCD:08,FS-IM-MS-MM:17}). This diagram determines the
nonlinear relationship $f_e(t):=f_e(\rho_e(t),u_e)$ between the
allowable flow rate $f_e(t)$ and the density $\rho_e(t)$ for a given
$u_e$, as shown in Fig.~\ref{fig:fd}.
In this study, we define the fundamental diagrams for various speed
limits as the curves shown in Fig.~\ref{fig:fd}. Note that for each
edge $e\in \edges$, the critical density $\rho^{c}_{e}({u}_{e})$ is
defined as the density at which the maximum allowable flow is
achievable for a given speed limit $u_e$. More precisely, given $u_e$,
the function $f_{e}(\rho,{u}_{e})$ is increasing if $\rho \leq
\rho^{c}_{e}({u}_{e})$ and decreasing if $\rho >
\rho^{c}_{e}({u}_{e})$.
\begin{figure}[tbp]%
\centering
\includegraphics[width=0.45\textwidth]{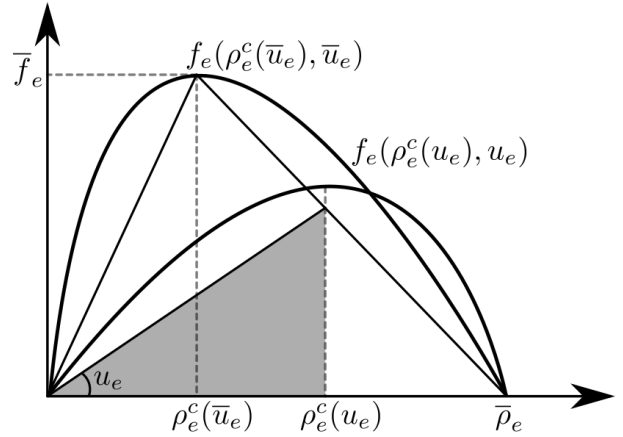}%
\caption{{\small Allowable flow-density relation of edge $e$ under
    speed limits $\overline{u}_e$ and $u_e$, respectively. The two
    curves are fundamental diagrams under the speed limits
    $\overline{u}_e$ and $u_e$. The straight lines are piecewise
    linear approximations of these two diagrams. The shaded region
    guarantees no congestion of edge $e$ under speed limit~$u_e$.} }%
\label{fig:fd}%
\end{figure}

Let $s_e(t) \in [0,\overline{u}_{e}]$ denote the average speed of the
vehicles on edge $e \in \edges$ at time $t \in \mathcal{T}$, and let
us assume that the majority of drivers have no incentive to exceed the
speed limit $u_e$, i.e., $s_e(t) \leq u_e$. Further, let $q_e(t)$
denote the flow rate on edge $e \in \edges$ at time slot $t \in
\mathcal{T}$. The flow $q_e(t)$ is equal to $\rho_e(t)s_e(t)$, for all
$e \in \edges$ and $t \in \mathcal{T}$. For each speed limit $u_e$,
edge $e \in \edges$ will be \textit{congested} at a time $t$ if the
flow rate $q_e(t)$ is greater than the allowable flow rate
$f_e(\rho_e,u_e)$ and the density $\rho^{c}_{e}(u_e)$ satisfies
$q_e(t) > f_e(\rho^{c}_{e}(u_e),u_e)$ and, thus, $\rho_e(t) >
\rho^{c}_{e}(u_e)$.  To prevent congestion at each time slot $t \in
\mathcal{T}$, we must have:
\begin{equation}
  \rho_e(t) \leq \rho^{c}_{e}(u_e),~\forall e\in\edges.
  \label{eq:RhoC}
\end{equation}
The above constraints are sufficient to guarantee that the road is not
congested, regardless of the flow rates.
  
The computation of critical density $\rho^{c}_{e}(u_e)$ and allowable
flow $f_e$ of each segment $e\in\mathcal{E}$ for different values of
speed limits $u_e$ is highly dependent on the fundamental diagram of
the segment. In this study, we approximate the fundamental diagram of
each segment with a finite set of piecewise linear functions as shown
in Fig.~\ref{fig:fd}. Each of these functions corresponds to a speed
limit. Let $\Gamma:=\{\gamma^{(1)}, \ldots, \gamma^{(m)}\}$ denote the
set of fixed non-zero speed limits.  For each edge $e\in \edges$ and
$u_e \in \Gamma$, we approximate the fundamental diagram of segment
$e$ by
\begin{equation*}
  f_{e}(\rho_{e}(t), u_e)=
	\begin{cases}
          u_e \rho_e(t) , & \; {\rm{if}} \; \rho_{e}(t) \leq \rho^{c}_{e}(u_e), \\
          \tau_e\overline{u}_e\left( \overline{\rho}_e - \rho_{e}(t)
          \right), & \; {\rm{o.w.},}
	\end{cases}
  \label{eq:f}
\end{equation*}
with
\begin{equation*}
  \rho^{c}_{e}(u_e):=\left({\tau_e\overline{\rho}_{e}\overline{u}_e}\right)/
  \left({\tau_e \overline{u}_{e} + {u}_{e}}\right),
\end{equation*}
where the parameter $\tau_e:=
{\overline{f}_e}/{\left(\overline{u}_e\overline{\rho}_e
    -\overline{f}_e\right)}$.

For each time $t\in\mathcal{T}$, let $\supscr{\omega_\mu}{in}(t)$
denote the random inflow of the starting node $\mu$ of the edge
$e\in\mathcal{E}$, let $\supscr{\omega_{\nu}}{out}$ denote the random
outflow of the ending node $\nu$ of the same edge, and let
$\omega_e(t):=\supscr{\omega_\mu}{in}(t)-\supscr{\omega_{\nu}}{out}(t)$
denote the difference between the inflow and outflow of edge $e$. In
this setting, each random variable $\supscr{\omega_\mu}{in}(t)$ has
nontrivial support $\mathcal{Z}_{\supscr{\omega_\mu}{in}(t)} \subset
\realnonnegative$ if $\mu \in \nodes_{A}$. Similarly, each random
variable $\supscr{\omega_\nu}{out}(t)$ has nontrivial support
$\mathcal{Z}_{\supscr{\omega_\nu}{out}(t)} \subset \realnonnegative$
if $\nu \in \nodes_{D}$. Without loss of generality, we assume that
the random inflows and outflows are independent from the speed limits
$u$.  Let $\vect{\rho}(0)=(\rho_1(0),\ldots,\rho_n(0))$ denote the
random initial density of the road $\mathcal{G}$ with nontrivial
support $\mathcal{Z}_{\vect{\rho}(0)} \subset \realnonnegative^n$, the
dynamics of the density on each edge $e \in \edges$ can be represented
by~\cite{CFD:94}
\begin{equation}
  \rho_e(t+1)= \rho_e(t)+ h(f_s(t)- f_e(t)+ \omega_e(t)), \; \forall \; t,
  \label{eq:CTM}
\end{equation}
where $h:=n\delta/L$ is determined by $n$, $\delta$ and $L$ such that
$h \leq 1/\max_{e\in \mathcal{E}}\{\overline{u}_e\}$, the subscript $s \in
\mathcal{E} \cup \varnothing$ denotes the preceding edge of edge $e$, and $f_e(t):=u_e\rho_e(t)
$ for each $e\in\mathcal{E}$, $t\in\mathcal{T}$. 

Random events, such as accidents on different segments of the road and
temporary lane closure, can affect the capacity and jam density of
each segment. In this study, we use $\supscr{f}{U}_e$ and
$\supscr{\rho}{U}_e$ to denote the temporary conditions on the
capacity and jam density. We assume that the system is in a specific
condition, and hence the values of the parameters $\supscr{f}{U}_e
\leq \overline{f}_e$ and $\supscr{\rho}{U}_e \leq \overline{\rho}_e$
are fixed and known to the operator. Since the values of
$\supscr{f}{U}_e$ and $\supscr{\rho}{U}_e$ are known, for all
$e\in\edges$, we can compute the maximum and minimum speed limits of
each segment under the certain event. For given $\supscr{f}{U}_e$ and
$\supscr{\rho}{U}_e$ of each segment $e$, we have $\rho^{c}_{e}(u_e)
u_e \leq \supscr{f}{U}_e$ and $\rho^{c}_{e}(u_e) \leq
\supscr{\rho}{U}_e-\pi$ with a small but positive threshold $\pi$ to
ensure non-zero flows on edge $e$. For each edge $e \in \edges$, we
need to ensure that the variable speed limit of the segment satisfies
the following constraint:
 \begin{equation}
\begin{aligned}
	\supscr{u}{L}_e \leq &u_e \leq \supscr{u}{U}_e, \; \\
	u_e \in \Gamma:=&\{\gamma^{(1)}, \ldots, \gamma^{(m)}\}.
\end{aligned}	\label{eq:ue}
\end{equation}

\subsection{Problem Formulation}
We aim to maximize the average flow passing through the highway. To
achieve this goal, we select our objective function to be
$\mathbb{E}_{\prob_\varpi}\{\frac{1}{T}
\sum_{e\in\edges,t\in\mathcal{T}}{f_e(t)}\}$, where $f_e(t):=\rho_e(t)
u_e$, for each $e\in\mathcal{E}$, $t\in\mathcal{T}$, and the notion
$\prob_{\varpi}$ is the distribution of the concatenated random
variable $\varpi:=(\rho(0),\omega)$.
Given the parameters $\{\overline{f}_e\}_{e\in\edges}$,
$\{\overline{\rho}_e\}_{e\in\edges}$,
$\{\supscr{f}{U}_e\}_{e\in\edges}$,
$\{\supscr{\rho}{U}_e\}_{e\in\edges}$ and $\Gamma$, the problem of
computing variable speed limits which are robust to the uncertainty
$\varpi$, can be formulated as follows: {\leqnomode
\begin{align}
  \label{eq:P} \tag{$\mathbf{P}$}~ \max\limits_{ \substack{u, \rho }}
  \quad & \mathbb{E}_{\prob_\varpi}\{ \frac{1}{T} \sum_{e\in\edges,t\in\mathcal{T}}{\rho_e(t) u_e}\} ,  \\
  \st \quad & \eqref{eq:RhoC},\; \eqref{eq:CTM},\; \eqref{eq:ue}, \;
  \nonumber
\end{align}}where
$\rho:=(\rho_1(1),\rho_2(1),\ldots,\rho_n(1),\rho_1(2),\ldots,\rho_n(T))$
is the concatenated variable of
$\{\rho_e(t)\}_{e\in\edges,t\in\mathcal{T}\setminus \{0 \} }$ and $u:=(u_1,\ldots,u_n)$
is that of $\{u_e\}_{e\in\edges}$.

The probability distribution $\prob_\varpi$ is needed in order to
compute a set of speed limits which are robust to the uncertainty
$\varpi$ and solve problem~\eqref{eq:P}. However, this distribution
$\prob_{\varpi}$ is unknown, and we assume that we have access to $N$
samples of the random variable $\varpi$. Thus, we investigate the
computation of a set of feasible variable speed limits that possess
certain out-of-sample guarantees within a distributionally robust
optimization framework~\cite{PME-DK:17,AC-JC:17}. In this way, we seek
to find a set of feasible ${u}$ with certificate ${J}({u})$, such that
the out-of-sample performance of $u$, $\mathbb{E}_{\prob_{\varpi}} \{
\frac{1}{T} \sum_{e\in\edges,t\in\mathcal{T}}{\rho_e(t) u_e}\}$, has
the following performance guarantee with a given confidence level
$\beta \in (0,1)$:
\begin{equation}
  {{P}^N}\left(\mathbb{E}_{\prob_\varpi} 
    \{ \frac{1}{T}  \sum_{e\in\edges,t\in\mathcal{T}}{\rho_e(t) u_e}\} \geq
    {J}({u}) \right)\geq 1- \beta,
\label{eq:perf}
\end{equation}
where ${P}^N$ denotes the probability that the event
$\mathbb{E}_{\prob_{\varpi}} \{ \frac{1}{T}
\sum_{e\in\edges,t\in\mathcal{T}}{\rho_e(t) u_e}\} \geq {J}({u})$
happens on the $N$ product of the sample space that defines $\varpi$.

\section{Performance Guaranteed Reformulation}\label{sec:Reform}
Problem~\eqref{eq:P} is intractable mainly due to the  uncertainty
$\varpi$. We aim to obtain a tractable reformulation of~\eqref{eq:P} that
enables us to compute the variable speed limits with performance
guarantees, as shown in~\eqref{eq:perf}. To achieve this goal, we
follow a four-step procedure.  First, we treat the density
trajectories as random variables and formulate Problem~\eqref{eq:P}
into an equivalent problem, Problem~\eqref{eq:P1}. Second, we
propagate the sample trajectories via the measurements of
$\varpi$. This step enables the the distributionally robust
optimization framework for dynamical systems with performance
guarantees be equivalent to~\eqref{eq:perf}. Third, we adapt the
distributionally robust optimization approach to Problem~\eqref{eq:P1}
for certificates. Finally, we obtain a tractable problem reformulation
for data-driven solutions and certificates.

\noindent \textbf{Step 1: (Equivalent Formulation of~\eqref{eq:P})}
  The random inflows and outflows along the highway
result in random density dynamics characterized
by~\eqref{eq:CTM}. Therefore, the densities $\rho_e(t)$, for all $e
\in \edges$ and $t \in \mathcal{T}\setminus\{ 0\}$, will be random
variables whose distributions are determined by speed limits $u$,
inflows and outflows $\omega$, and the initial density
$\vect{\rho}(0)$. In this step, we take the decision variable $\rho$
in Problem~\eqref{eq:P} as the random variable. Using Probability
Theory, we derive an equivalent Problem~\eqref{eq:P1} via a
reformulation of the constraints in~\eqref{eq:P}.

Let us take the variable $\rho$ 
considered in~\eqref{eq:P} as the random variable. For each speed
limit $u$ characterized by~\eqref{eq:ue}, let $\mathcal{Z}(u)$ and
$\prob(u)$ denote the support of $\rho$ and the probability
distribution of $\rho$, respectively. Recall that the support of
$\rho$ is the smallest closed set such that the probability $P(\rho
\in \mathcal{Z}(u))=1$. Note that in Problem~\eqref{eq:P},
constraints~\eqref{eq:RhoC} on $\rho$ and $u$ ensure no
congestion. Therefore, to obtain an equivalent problem, we need to
select $\mathcal{Z}(u) $ such that $\mathcal{Z}(u) \subseteq
\setdef{\rho \in \real^{nT}}{\eqref{eq:RhoC}}$. Without loss of
generality, we select $\mathcal{Z}(u):=\setdef{\rho \in
  \real^{nT}}{\eqref{eq:RhoC}}$. To fully characterize the random
variable $\rho$, we need to determine the distribution
$\prob(u)$. Using the density dynamics~\eqref{eq:CTM}, we can
represent $\prob(u)$ as a convolution of the distribution
$\prob_{\varpi}$. Given that $\prob_{\varpi}$ is unknown, the
characterization of $\prob(u)$ is done in later steps.

We denote by ${\mathcal{M}}(\mathcal{Z}(u))$ the space of all
probability distributions supported on $\mathcal{Z}(u)$, and
equivalently write the unsolvable Problem~\eqref{eq:P} as {\leqnomode
\begin{align}
  \label{eq:P1} \tag{$\mathbf{P1}$}~\max\limits_{u} \quad &
  \mathbb{E}_{\prob(u)}\{ H(u;{\rho}):= \frac{1}{T}
  \sum_{e\in\edges,t\in\mathcal{T}}{\rho_e(t) u_e}\} , \\
  \st \quad & \prob(u) \textrm{ characterized by~\eqref{eq:CTM} and } \prob_{\varpi}, \nonumber \\
  & \prob(u) \in {\mathcal{M}}(\mathcal{Z}(u)), \;
  \eqref{eq:ue}. \nonumber
\end{align}}We can now obtain the performance guarantee
of~\eqref{eq:P1} by considering the induced out-of-sample performance
on $\mathbb{P}(u)$, written as $\mathbb{E}_{\prob(u)} \{
\frac{1}{T}\sum_{e\in\edges,t\in\mathcal{T}}{\rho_e(t) u_e}\}$. For
all Problems derived later, we will use the performance guarantees
equivalent to~\eqref{eq:perf}, as follows:
\begin{equation}
  {{P}^N}\left(\mathbb{E}_{\prob(u)} 
    \{ \frac{1}{T}  \sum_{e\in\edges,t\in\mathcal{T}}{\rho_e(t) u_e}\} \geq
    {J}({u}) \right)\geq 1- \beta,
\label{eq:perfnew}
\end{equation} 
where notions ${P}^N$, ${J}({u})$ and $\beta$ in~\eqref{eq:perfnew}
are those as in~\eqref{eq:perf}.

\noindent \textbf{Step 2: (Sample Trajectory Propagators)} In this
step, we obtain samples of $\rho$ and use them to deal with
$\prob(u)$. Consequently, these samples enable the distributionally
robust optimization framework for~\eqref{eq:P1}.

Given the speed limit $u \in \Gamma$, the density dynamics represented
by~\eqref{eq:CTM} reduce to a linear system. As the result of the
Uniqueness Solutions of Linear Systems, we can use~\eqref{eq:CTM} to
obtain a unique density trajectory $\rho$ for each measurement of
$\varpi$. As mentioned earlier, we assume that a set of data
comprising $N$ samples of random variable $\varpi$ is available. Let
$\mathcal{L}= \untilone{N}$ denote the index set for realizations of
the random variable ${\varpi}$, and let us denote the set of
independent and identically distributed (iid) realizations of $\varpi$
by $\{\varpi^{(l)}:=(\rho^{(l)}(0),
\omega^{(l)})\}_{l\in\mathcal{L}}$.  Given these realizations
$\{\varpi^{(l)}\}_{l\in\mathcal{L}}$, the sample trajectories
$\{{\rho}^{(l)}\}_{l\in\mathcal{L}}$ of the random traffic flow
dynamics for each edge $e \in \edges$ with its precedent edge $s\in
\edges\cup \varnothing$, are given by
\begin{equation}
  {\rho}^{(l)}_e(t+1)= {\rho}^{(l)}_e(t)+ 
  h(u_s {\rho}^{(l)}_s(t)- u_e {\rho}^{(l)}_e(t)+ \omega^{(l)}_e(t)),
	\label{eq:Rhohat}\tag{2a}
\end{equation}
for all $t \in \mathcal{T}\setminus \{ T\}$ and sample index $l \in
\mathcal{L}$. The following lemma establishes that
$\{{\rho}^{(l)}\}_{l\in\mathcal{L}}$ are iid samples from $\prob(u)$.
\begin{lemma}[Iid sample generators of ${\rho}$] Given $u \in \Gamma$
  and iid realizations $\{\varpi^{(l)}\}_{l\in\mathcal{L}}$ of
  $\varpi$, the system dynamics~\eqref{eq:Rhohat} generates iid sample
  trajectories $\{{\rho}^{(l)}\}_{l\in\mathcal{L}}$ of $\prob(u)$.
\label{lemma:RhoGener}
\end{lemma}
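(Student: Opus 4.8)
The plan is to exploit the fact that, once the speed limit $u\in\Gamma$ is fixed, the density recursion becomes a \emph{deterministic} map of the uncertainty, so that each sample trajectory is simply the image of the corresponding sample of $\varpi$ under one and the same measurable function; the iid property is then inherited directly from that of $\{\varpi^{(l)}\}_{l\in\mathcal{L}}$.

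First I would fix $u\in\Gamma$ and observe that, with $f_e(t)=u_e\rho_e(t)$, the dynamics~\eqref{eq:CTM} (equivalently~\eqref{eq:Rhohat}) are linear in the state $\rho(t)$ and in the disturbance $\omega(t)$. Unrolling the recursion over $t\in\mathcal{T}$, each $\rho_e(t)$ is a fixed linear combination of the initial density $\rho(0)$ and of $\{\omega(0),\dots,\omega(t-1)\}$; concatenating over all $e\in\edges$ and $t\in\mathcal{T}\setminus\{0\}$ yields a single linear map $g_u:\varpi\mapsto\rho$ with $\rho=g_u(\varpi)\in\real^{nT}$. Here the \emph{Uniqueness of Solutions of Linear Systems} invoked in Step~2 is exactly what guarantees that $g_u$ is single-valued, i.e., that each realization of $\varpi$ produces precisely one trajectory, and linearity (hence continuity) of $g_u$ delivers Borel measurability for free.

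Next I would identify $\prob(u)$ as the pushforward of $\prob_\varpi$ under $g_u$. Indeed, by the construction in Step~1 the random variable $\rho$ in~\eqref{eq:P1} is exactly $g_u(\varpi)$, so by definition $\prob(u)=(g_u)_{\#}\prob_\varpi$. Since each realization satisfies $\rho^{(l)}=g_u(\varpi^{(l)})$ and each $\varpi^{(l)}$ is distributed as $\prob_\varpi$, the pushforward identity gives that every $\rho^{(l)}$ has law $(g_u)_{\#}\prob_\varpi=\prob(u)$, which settles the \emph{identically distributed} part. For \emph{independence}, I would invoke the standard fact that measurable functions of independent inputs are independent: since $\sigma(\rho^{(l)})=\sigma(g_u(\varpi^{(l)}))\subseteq\sigma(\varpi^{(l)})$ and the $\sigma$-algebras $\sigma(\varpi^{(l)})$ are mutually independent, so are the $\sigma(\rho^{(l)})$, whence $\{\rho^{(l)}\}_{l\in\mathcal{L}}$ are mutually independent. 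Combining the two parts yields that $\{\rho^{(l)}\}_{l\in\mathcal{L}}$ are iid with law $\prob(u)$, as claimed.

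I expect the only genuinely delicate point to be the first step, namely verifying that fixing $u\in\Gamma$ linearizes~\eqref{eq:CTM} so that the input-to-state map $g_u$ is well defined, deterministic, and measurable. Everything downstream—identical distribution via the pushforward and independence via functions of independent variables—is then a routine application of standard probability facts and uses no further structure of the traffic model.
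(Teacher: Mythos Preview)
Your proposal is correct and follows essentially the same approach as the paper: the paper's proof consists of the single observation that continuous functions of iid random variables are iid, which is exactly what you unpack in more detail via the deterministic map $g_u$, the pushforward identification $\prob(u)=(g_u)_{\#}\prob_\varpi$, and the $\sigma$-algebra containment argument. Your version is simply a more explicit rendering of the same idea.
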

\begin{proof}
We know that continuous functions of iid random variables are
iid, therefore the sample trajectories
$\{{\rho}^{(l)}\}_{l\in\mathcal{L}}$ generated
by~\eqref{eq:Rhohat} are iid realizations of $\prob(u)$. 
\end{proof}

Consider the random variable $\varpi$ with unknown distribution
$\prob_{\varpi}$. Let $\subscr{\mathcal{M}}{lt}(\mathcal{Z}_{\varpi})
\subset {\mathcal{M}}(\mathcal{Z}_{\varpi})$ denote the space of all
light-tailed probability distributions supported on
$\mathcal{Z}_{\varpi}$. We make the following assumption on
$\prob_{\varpi}$:
\begin{assumption}[Light tailed unknown distributions]
  It holds that $\prob_{\varpi} \in
  {\subscr{\mathcal{M}}{lt}}(\mathcal{Z}_{\varpi})$, i.e., there
  exists an exponent $a>1$ such that: $b:= \mathbb{E}_{\prob_{\varpi}}
  [\exp(\Norm{\varpi}^a)] < \infty$. \label{assump:1}
\end{assumption}

The above assumption invokes the following lemma:
\begin{lemma}[Light-tailed distribution of ${\rho}$] If
  Assumption~\ref{assump:1} holds, then $\prob(u) \in
  \subscr{\mathcal{M}}{lt}(\mathcal{Z}{(u)})$. 
\label{lemma:hatRho}
\end{lemma}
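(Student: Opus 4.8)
The plan is to exploit the fact that, for a fixed speed limit $u \in \Gamma$, the trajectory $\rho$ is a \emph{linear} function of $\varpi = (\rho(0),\omega)$. Indeed, for fixed $u$ the recursion \eqref{eq:Rhohat} (equivalently \eqref{eq:CTM}) is a linear time-invariant system with no constant term, so unrolling it from $t=1$ to $t=T$ expresses each $\rho_e(t)$ as a linear combination of the initial densities $\rho(0)$ and the past disturbances $\omega(0),\dots,\omega(t-1)$. Stacking these relations, there is a matrix $M=M(u)$, depending only on $u$, $h$, and the graph topology, with $\rho = M\varpi$. Hence $\prob(u)$ is the pushforward of $\prob_\varpi$ under $M$, and the change-of-variables identity $\mathbb{E}_{\prob(u)}[g(\rho)] = \mathbb{E}_{\prob_\varpi}[g(M\varpi)]$ holds for every measurable $g$. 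Since $\prob(u)$ is supported on $\mathcal{Z}(u)$ by the construction in Step~1, the only substantive claim to verify is light-tailedness.

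First I would reduce the light-tail condition to a norm estimate. Because $M$ is a fixed finite-dimensional matrix, its operator norm $C:=\Norm{M}$ induced by the $1$-norm is finite, giving $\Norm{\rho}=\Norm{M\varpi}\le C\Norm{\varpi}$. Applying the change of variables with $g(\rho)=\exp(\Norm{\rho}^{a'})$, for an exponent $a'$ to be chosen, yields
\begin{equation*}
  \mathbb{E}_{\prob(u)}[\exp(\Norm{\rho}^{a'})]
  = \mathbb{E}_{\prob_\varpi}[\exp(\Norm{M\varpi}^{a'})]
  \le \mathbb{E}_{\prob_\varpi}[\exp(C^{a'}\Norm{\varpi}^{a'})],
\end{equation*}
so it remains to show the rightmost quantity is finite for some $a'>1$.

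The hard part will be that $C$ may exceed $1$, so one cannot simply reuse the exponent $a$ of Assumption~\ref{assump:1}: the factor $C^{a}$ inside the exponential is not controlled by $b=\mathbb{E}_{\prob_\varpi}[\exp(\Norm{\varpi}^a)]$. The remedy is to pick any $a'\in(1,a)$ and absorb the multiplicative constant into the slack between $a'$ and $a$. Since $a'<a$, for the fixed value $\lambda=C^{a'}>0$ there is a radius $R$ such that $\lambda\Norm{\varpi}^{a'}\le\Norm{\varpi}^a$ whenever $\Norm{\varpi}\ge R$. Splitting the expectation over $\{\Norm{\varpi}<R\}$ and $\{\Norm{\varpi}\ge R\}$ then gives
\begin{equation*}
  \mathbb{E}_{\prob_\varpi}[\exp(C^{a'}\Norm{\varpi}^{a'})]
  \le \exp(C^{a'}R^{a'}) + \mathbb{E}_{\prob_\varpi}[\exp(\Norm{\varpi}^a)]
  = \exp(C^{a'}R^{a'}) + b < \infty,
\end{equation*}
so $\mathbb{E}_{\prob(u)}[\exp(\Norm{\rho}^{a'})]<\infty$ with $a'>1$, which is exactly the light-tailed condition. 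Combined with the support membership noted above, this establishes $\prob(u)\in\subscr{\mathcal{M}}{lt}(\mathcal{Z}(u))$.
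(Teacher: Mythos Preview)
Your argument follows the same strategy as the paper's: exploit the linearity of the dynamics in $\varpi$ (for fixed $u$) to obtain a bound $\Norm{\rho}\le C\Norm{\varpi}$, and then push this through the exponential moment. The paper derives the norm bound more explicitly, writing $\rho(t^\star)=(I_n+hA(u))^{t^\star}\rho(0)+h\sum_{\tau}\omega(\tau)$ and chaining norm equivalences to reach $\Norm{\rho}\le M_2M_3(t^\star+1)\Norm{\varpi}$; your abstract ``$\rho=M\varpi$, $C=\Norm{M}$'' captures the same content.

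The one substantive difference is the last step. The paper keeps the same exponent $a$ and simply asserts $\mathbb{E}_{\prob_\varpi}[\exp(M_4\Norm{\varpi}^a)]<\infty$ from $\mathbb{E}_{\prob_\varpi}[\exp(\Norm{\varpi}^a)]<\infty$, which is not justified when $M_4>1$. Your device of dropping to an exponent $a'\in(1,a)$ and absorbing the multiplicative constant into the gap $a-a'$ via the truncation at radius $R=\lambda^{1/(a-a')}$ is exactly what is needed to make that step rigorous. So your proof is essentially the paper's, but with the final inequality properly closed.
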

\begin{proof}
  To show the random variable ${\rho}$ has a light-tailed
  distribution, we bound its norm by $\Norm{\varpi}$ via an norm
  equivalence and dynamics on ${\rho}$.  In this way, by norm
  equivalence, there exists $M_1>0$ such that $\Norm{{\rho}} \leq M_1
  \Norm{{\rho}}_{\infty}$. Let $t^{\star} \in \argmax_{t \in
    \mathcal{T}\setminus \{ 0\}} \{ \Norm{\rho(t)}_{\infty} \}$, we
  have:
	\begin{equation*}
          \begin{aligned}
            \Norm{{\rho}} &\leq M_1 \Norm{{\rho}}_{\infty}, \\
            &=M_1 \max_{t \in \mathcal{T}\setminus \{ 0\}} \{ \Norm{\rho(t)}_{\infty} \}=M_1 \Norm{\rho(t^{\star})}_{\infty}. \\
          \end{aligned}
\end{equation*}				
Note that $\rho(t^{\star})$ is the density at time slot
$t^{\star}$. Given that $\rho(t^{\star})$ is the function of $\omega$,
$\rho(0)$ and $u$, we let $A(u)$ denote the matrix that is consistent
with the graph $\mathcal{G}$, and write the density $\rho(t^{\star})$
in the following compact form:
\begin{equation*}
  \rho(t^{\star})= 
  (I_n + hA(u))^{t^{\star}}\rho(0)+ h \sum\limits_{\tau=0}^{t^{\star}-1} \omega(\tau),
	\label{eq:Rhohat2}
\end{equation*}
with
\[A(u)=\begin{bmatrix}
    -u_{1} &       &        &        &   \\
    u_{1} & -u_{2} &        &        &  \\
          &  u_{2} & \ddots &        &  \\		
          &        & \ddots & \ddots &  \\
          &        &        & u_{n-1}& -u_{n}
\end{bmatrix}.
\]		
	Let $M_2:= \max\{M_1 \Norm{(I_n +
    hA)^{t^{\star}}}, \; M_1h \}$, we compute:
	\begin{equation*}
	\begin{aligned}
          \Norm{{\rho}} &\leq M_1 \Norm{\rho(t^{\star})}_{\infty}, \\
	&=M_1 \Norm{ (I_n + hA)^{t^{\star}}\rho(0) +
            h \sum\limits_{\tau=0}^{t^{\star}-1} \omega(\tau) }_{\infty}, \\
          &\leq M_1 \Norm{(I_n + hA)^{t^{\star}}}
          \Norm{\rho(0)}_{\infty} +
          M_1h\sum\limits_{\tau=0}^{t^{\star}-1} \Norm{\omega(\tau)}_{\infty}, \\
          & \leq M_2\left(\Norm{\rho(0)}_{\infty}
            +\sum\limits_{\tau=0}^{t^{\star}-1}
            \Norm{\omega(\tau)}_{\infty} \right), \\
						&\leq M_2(t^{\star}+1)\Norm{\varpi}_{\infty}.
		\end{aligned}
\end{equation*}	
Again using norm equivalence, there exists $M_3>0$ such
that $\Norm{\varpi}_{\infty} \leq M_3 \Norm{\varpi}$. This results in
\begin{equation*}
	\begin{aligned}
          \Norm{{\rho}} &\leq M_2(t^{\star}+1)\Norm{\varpi}_{\infty} \leq M_2M_3(t^{\star}+1)\Norm{\varpi}. \\
	\end{aligned}
\end{equation*}
Let $M_4=({M_2M_3(t^{\star}+1)})^a < \infty$. Then for each $u$ and
any $a>1$ such that $\mathbb{E}_{\prob_{\varpi}}
[\exp(\Norm{\varpi}^a)] < \infty$, we have $\mathbb{E}_{\prob(u)}
[\exp(\Norm{{\rho}}^a)]= \mathbb{E}_{\prob_{\varpi}}
[\exp(\Norm{{\rho}(u,\varpi)}^a)]\leq \mathbb{E}_{\prob_{\varpi}}
[\exp( M_4 \Norm{\varpi}^a)]< \infty$, that is, $\prob(u)$ is light
tailed.
\end{proof}

The above lemma is the last ingredient to enable the distributionally
robust optimization framework for~\eqref{eq:P1} in the next step.

\noindent \textbf{Step 3: (Certificates)} We now design a certificate
to satisfy the performance guarantee~\eqref{eq:perfnew} using the
distributionally robust optimization approach. 
To design a certificate
${J}({u})$ for a given set of speed limits ${u}$, we need to estimate
the probability distribution ${\prob}(u)$ empirically. To do so, we
use the sample trajectories $\{{\rho}^{(l)}\}_{l\in\mathcal{L}}$
obtained from sample generators~\eqref{eq:Rhohat}.  Let
$\hat{\prob}(u):= ({1}/{N})\sum_{l\in\mathcal{L}}
\delta_{\{{\rho}^{(l)}\}}$ denote the estimated probability
distribution. In this way, by application of the point mass operator
$\delta$, we have ${\mathbb{E}_{\mathbb{\hat{P}}(u)}} \{
H(u;{\rho})\}= ({1}/{N})\sum_{l\in\mathcal{L}} H({u};{\rho}^{(l)})$,
which is taken to be the candidate certificate for the performance
guarantee~\eqref{eq:perfnew}.

Note that such certificates only result in an approximation of the
out-of-sample performance if $\prob$ is unknown, and~\eqref{eq:perfnew}
cannot be guaranteed in probability. To achieve the out-of-sample
performance, we follow the procedure proposed
in~\cite{AC-JC:17,DL-SM:18-extended}. More precisely, we determine an
\textit{ambiguity set} $\mathcal{\hat{P}}({u})$ containing all the
possible probability distributions supported on $\mathcal{Z}({u})
\subseteq \real^{nT}$ that can generate the sample trajectories $\{{\rho}^{(l)}\}_{l\in\mathcal{L}}$ with
high confidence. Then, with the given data-driven solution ${u}$, it
is plausible to consider the worst-case expectation of the
out-of-sample performance for all distributions contained in
$\mathcal{\hat{P}}({u})$. Such worst-case distribution offers a lower
bound for the out-of-sample performance with high probability.

Lemma~\ref{lemma:hatRho} on the light-tailed distribution of
${\rho}$ validates the modern measure concentration
result~{\cite[Theorem~2]{NF-AG:15}} on
$\subscr{\mathcal{M}}{lt}(\mathcal{Z}({u}))$, which provides an
intuition for considering the Wasserstein ball
$\mathbb{B}_{\epsilon}(\hat{\prob}({u}))$ of center $\hat{\prob}({u})$
and radius $\epsilon$ as the ambiguity set $\mathcal{\hat{P}}({u})$.
The ambiguity sets
$\mathcal{\hat{P}}({u}):=\mathbb{B}_{\epsilon}(\hat{\prob}({u}))$
allow us to provide the certificate that ensures the performance
guarantee in~\eqref{eq:perfnew} for any data-driven solution ${u}$, by
taking ${J}({u}):= \inf_{\mathbb{Q} \in \mathcal{\hat{P}}({u})}
{\mathbb{E}_{\mathbb{Q}} \{H(u;{\rho}) \} }$.

\noindent \textbf{Step 4: (Tractable Reformulation of~\eqref{eq:P1})}
To obtain the certificate ${J}({u})$, we need to solve an
infinite-dimensional optimization problem, which is generally hard.
With an extended version of the strong duality results for moment
problem~\cite[Lemma~3.4]{AS-MG-MAL:01}, we can reformulate the optimization
problem for ${J}({u})$ into a finite-dimensional convex programming
problem as the following: 
\begin{equation*}\small
	\begin{aligned}
          J(u)=\sup_{\lambda \geq 0} \; &- \lambda\epsilon(\beta) +
          \frac{1}{N} \sum_{l\in\mathcal{L}} \inf_{{\rho} \in
            \mathcal{Z}(u) } \left(\lambda
            \Norm{{\rho}-{\rho}^{(l)}} +{H(u;{\rho})}
          \right), \\
					 \st \; & \{{\rho}^{(l)}\}_{l\in\mathcal{L}} \textrm{ is obtained from }\eqref{eq:Rhohat},
	\end{aligned}
\end{equation*}
\normalsize where the parameter $\beta$ is the confidence level
in~\eqref{eq:perfnew} and the value $\epsilon(\beta)$ is the radius of
$\mathbb{B}_{\epsilon(\beta)}$ as calculated
in~\cite{DL-SM:18-extended}.  To obtain a data-driven speed limits $u$
with a good out-of-sample performance of~\eqref{eq:P1}, we need to
obtain $u$ with a high certificate ${J}({u})$. Finally, we can obtain
a data-driven solution ${u}$ with a high certificate ${J}({u})$, by
solving the problem: {\leqnomode
\begin{align}
  \label{eq:P2} \tag{$\mathbf{P2}$}~\max\limits_{u, \; \st \;
    {\eqref{eq:ue}}} \quad & {J}({u}).
\end{align}}

Problem~\eqref{eq:P2} consists of many inner optimization problems. To
propose a solution method, we consider an equivalent optimization
problem given as follows:

{\leqnomode
\begin{align}
  \label{eq:P3} \tag{$\mathbf{P3}$}~
  \max\limits_{u,{\rho},\lambda,\mu,\nu,\eta } \quad & -\lambda
  \epsilon(\beta) - 
  \frac{1}{N}\sum_{e,t,l}{\overline{f}_e\overline{\rho}_e\eta_e^{(l)}(t)  } \\
  & \hspace{2.5cm} +\frac{1}{N}\sum_{l}{\left\langle \nu^{(l)} , {\rho}^{(l)}\right\rangle}, \nonumber \\
  \st \quad & [\overline{f}+u\circ(\overline{\rho}-
  \rho^{c}(\overline{u}))]\otimes \vectorones{T} \circ \eta^{(l)}  \nonumber \\
  & \hspace{2.5cm} -\mu^{(l)} \geq \vectorzeros{nT},  
  \; \forall \; l\in\mathcal{L}, \nonumber \\
  & \nu^{(l)} =\mu^{(l)}+ \frac{1}{T}u \otimes \vectorones{T},
  \; \forall \; l\in\mathcal{L}, \nonumber \\
  & \Norm{\nu^{(l)}}_{\star} \leq \lambda, \; \forall 
  \; l\in\mathcal{L}, \nonumber\\
  & \eta^{(l)} \geq \vectorzeros{nT}, \; \forall \; l\in\mathcal{L},\nonumber \\
  & \eqref{eq:ue},\; \eqref{eq:Rhohat},\nonumber
\end{align}}where decision variables $(u,{\rho},\lambda,\mu,\nu,\eta)$
are concatenated versions of $ u_e$, ${\rho}^{(l)}_e(t)$, $\lambda$,
$\mu^{(l)}_e(t)$, $\nu^{(l)}_e(t)$, $\eta^{(l)}_e(t) \in \real$, for
all $l \in\mathcal{L}$, $t \in \mathcal{T} \setminus \{ 0\}$, and $e
\in \edges$. The parameter $\epsilon(\beta)$ is the radius of
$\mathbb{B}_{\epsilon(\beta)}$, 
the value $\rho^{c}(\overline{u}):=\overline{f}/\overline{u}$ is the critical
density under the free flow and $\overline{\rho}$ is the jam density.

The following lemma shows that problems~\eqref{eq:P2}
and~\eqref{eq:P3} are equivalent for $({u},{J})$.

\begin{lemma}[Tractable reformulation of~\eqref{eq:P2}]
  Consider the DRO setting as
  in~\eqref{eq:P2}. Then Problem~\eqref{eq:P2} is equivalent
  to~\eqref{eq:P3} in the sense that their optimal objective value are
  the same and the set of optimizers of~\eqref{eq:P2} is the
  projection of that of~\eqref{eq:P3}.
  Further, for any feasible point $(u,{\rho},\lambda,\mu,\nu,\eta)$
  of~\eqref{eq:P3}, let $\hat{J}(u)$ denote the value of its objective function. Then the pair $({u},\hat{J}(u))$ gives a data-driven solution $u$ with an estimate
  of its certificate ${J}(u)$ by $\hat{J}(u)$, such that the performance
  guarantee~\eqref{eq:perfnew} holds for $(u,\;\hat{J}(u) )$. 
\label{lemma:P2}
\end{lemma}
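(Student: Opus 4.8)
My plan is to obtain \eqref{eq:P3} from \eqref{eq:P2} by dualizing the inner infimum in the definition of $J(u)$, merging the resulting nested suprema into one joint maximization, and finally eliminating the only nonlinear term through an epigraph-type auxiliary variable; the out-of-sample claim then drops out of weak duality together with monotonicity of $P^N$. Step one rewrites the certificate
\[
J(u)=\sup_{\lambda\ge 0}\;-\lambda\epsilon(\beta)+\frac{1}{N}\sum_{l\in\mathcal{L}}\inf_{\rho\in\mathcal{Z}(u)}\bigl(\lambda\Norm{\rho-\rho^{(l)}}+H(u;\rho)\bigr),
\]
noting that $H(u;\rho)=\langle \tfrac{1}{T}\,u\otimes\vectorones{T},\,\rho\rangle$ is linear in $\rho$ and that $\mathcal{Z}(u)=\{\rho:\rho_e(t)\le\rho^{c}_e(u_e)\}$ is a polyhedron. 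For each $l$ I attach a multiplier $\mu^{(l)}\ge \vectorzeros{nT}$ to the constraint $\rho\le\rho^{c}(u)$ and invoke Lagrangian strong duality (legitimate since the constraints are affine and Slater holds, the finite-dimensional specialization of the moment-duality result cited from~\cite{AS-MG-MAL:01}). After the shift $z=\rho-\rho^{(l)}$ and setting $\nu^{(l)}:=\mu^{(l)}+\tfrac{1}{T}u\otimes\vectorones{T}$, the residual $\inf_z\,\lambda\Norm{z}+\langle\nu^{(l)},z\rangle$ equals $0$ when $\Norm{\nu^{(l)}}_\star\le\lambda$ and $-\infty$ otherwise, so each inner infimum becomes $\sup\{\langle\nu^{(l)},\rho^{(l)}\rangle-\langle\mu^{(l)},\rho^{c}(u)\rangle\}$ subject to $\mu^{(l)}\ge \vectorzeros{nT}$ and $\Norm{\nu^{(l)}}_\star\le\lambda$. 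Because $J(u)$ is already a supremum over $\lambda$ and \eqref{eq:P2} a maximization over $u$, all these suprema merge into a single joint maximization in $(u,\lambda,\mu,\nu)$, with \eqref{eq:Rhohat} retained to generate $\{\rho^{(l)}\}_{l\in\mathcal{L}}$.

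The only remaining nonlinearity is the coupling $\langle\mu^{(l)},\rho^{c}(u)\rangle$, since $\rho^{c}_e(u_e)=\tau_e\overline{\rho}_e\overline{u}_e/(\tau_e\overline{u}_e+u_e)$ is rational in $u_e$. I would remove it with the algebraic identity $\rho^{c}_e(u_e)=\overline{f}_e\overline{\rho}_e/g_e(u_e)$, where $g_e(u_e):=\overline{f}_e+u_e(\overline{\rho}_e-\rho^{c}_e(\overline{u}_e))$ is exactly the bracketed factor of \eqref{eq:P3} (this uses $\rho^{c}(\overline{u})=\overline{f}/\overline{u}$ and $\tau_e=\overline{f}_e/(\overline{u}_e\overline{\rho}_e-\overline{f}_e)$, and $g_e(u_e)>0$ for all admissible $u_e$ because $\tau_e>0$). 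Introducing $\eta^{(l)}\ge\vectorzeros{nT}$ together with the constraint $g(u)\otimes\vectorones{T}\circ\eta^{(l)}-\mu^{(l)}\ge\vectorzeros{nT}$ and replacing $-\langle\mu^{(l)},\rho^{c}(u)\rangle$ by $-\tfrac{1}{N}\sum_{e,t}\overline{f}_e\overline{\rho}_e\eta^{(l)}_e(t)$ reproduces exactly \eqref{eq:P3}: at the maximum the objective drives $\eta^{(l)}_e(t)$ down to $\max\{0,\mu^{(l)}_e(t)/g_e(u_e)\}$, which equals $\mu^{(l)}_e(t)/g_e(u_e)$ once $\mu^{(l)}\ge\vectorzeros{nT}$, so that $\overline{f}_e\overline{\rho}_e\eta^{(l)}_e(t)=\mu^{(l)}_e(t)\rho^{c}_e(u_e)$ and the substitution is lossless. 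Equality of optimal values and the projection statement then follow because every step preserves the value (no duality gap) and the extremizers are attained, so that projecting an optimizer of \eqref{eq:P3} onto its $u$-component returns an optimizer of \eqref{eq:P2}, and conversely.

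For the performance claim at an \emph{arbitrary} feasible point, I would use weak duality at each inner problem. From $\eta^{(l)}\ge\vectorzeros{nT}$, $g_e(u_e)>0$ and $\rho^{c}_e=\overline{f}_e\overline{\rho}_e/g_e$ one obtains $\overline{f}_e\overline{\rho}_e\eta^{(l)}_e(t)\ge\mu^{(l)}_e(t)\rho^{c}_e(u_e)$, whence the objective value $\hat{J}(u)$ of any feasible $(u,\rho,\lambda,\mu,\nu,\eta)$ is bounded above by $-\lambda\epsilon(\beta)+\tfrac{1}{N}\sum_l(\langle\nu^{(l)},\rho^{(l)}\rangle-\langle\mu^{(l)},\rho^{c}(u)\rangle)$, a dual-feasible value of the inner problems, which is in turn $\le J(u)$ by weak duality and the supremum over $\lambda$. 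Thus $\hat{J}(u)\le J(u)$. Since the construction of Step~3 guarantees $P^N(\mathbb{E}_{\prob(u)}\{H(u;\rho)\}\ge J(u))\ge 1-\beta$, and the event $\{\mathbb{E}_{\prob(u)}\{H(u;\rho)\}\ge J(u)\}$ is contained in $\{\mathbb{E}_{\prob(u)}\{H(u;\rho)\}\ge\hat{J}(u)\}$, monotonicity of $P^N$ yields \eqref{eq:perfnew} for the pair $(u,\hat{J}(u))$. The main obstacle underlying both parts is twofold: verifying that strong duality holds with no gap for the inner moment/infimal problem, and that the auxiliary-variable substitution for $\rho^{c}(u)$ is exact, which relies on $g_e(u_e)>0$ (automatic) and on dual feasibility $\mu^{(l)}\ge\vectorzeros{nT}$ at the relevant points; everything else is bookkeeping.
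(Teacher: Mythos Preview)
Your argument is sound and arrives at \eqref{eq:P3} by a route that differs from the paper's. The paper first writes $\lambda\Norm{\rho-\rho^{(l)}}$ via the dual-norm supremum, invokes a minmax theorem (compactness of the dual ball) to swap $\inf$ and $\sup$, and then uses conjugate calculus---the infimal-convolution identity $(H+\chi_{\mathcal{Z}})^{\star}=H^{\star}\square\sigma_{\mathcal{Z}}$---before applying LP strong duality to $\sigma_{\mathcal{Z}(u)}$ to produce the $\eta$-variables. You instead dualize $\rho\in\mathcal{Z}(u)$ directly with a Lagrange multiplier, evaluate the resulting unconstrained inner $\inf$ through the conjugate of $\Norm{\cdot}$, and bring in $\eta$ via the scalar identity $\rho^{c}_e(u_e)=\overline{f}_e\overline{\rho}_e/g_e(u_e)$. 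Your path is more elementary---no minmax swap, no infimal convolution---while the paper's keeps the LP-dual origin of the $\eta$-block explicit and handles the support function mechanically. Your treatment of the second claim (weak duality plus monotonicity of $P^N$) is in fact more detailed than the paper's one-line ``$\hat{J}(u)$ is a lower bound of \eqref{eq:P3} and hence of \eqref{eq:P2}''.

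One point needs tightening. You take $\mathcal{Z}(u)=\{\rho:\rho\le\rho^{c}(u)\}$ and dualize only the upper bound, which forces the sign constraint $\mu^{(l)}\ge\vectorzeros{nT}$; but \eqref{eq:P3} carries no such constraint on $\mu^{(l)}$. The paper's proof in fact evaluates $\sigma_{\mathcal{Z}(u)}$ over the box $0\le\xi\le\rho^{c}(u)$, so nonnegativity $\rho\ge 0$ is implicitly part of $\mathcal{Z}(u)$. If you also dualize $\rho\ge 0$ with a multiplier $\kappa^{(l)}\ge\vectorzeros{nT}$ and then eliminate $\kappa^{(l)}$, the combination $\mu^{(l)}-\kappa^{(l)}$ becomes sign-free and the \emph{inequality} $g(u)\otimes\vectorones{T}\circ\eta^{(l)}\ge\mu^{(l)}$ (rather than an equality) emerges naturally---this is exactly \eqref{eq:P3}. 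As written, your substitution yields \eqref{eq:P3} plus $\mu^{(l)}\ge\vectorzeros{nT}$, a strictly smaller feasible region; including the lower bound on $\rho$ (physically required for densities anyway) closes the gap.
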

\begin{proof}
  Following~\cite{AC-JC:17,DL-SM:18-extended}, and references therein,
  we write Problem~\eqref{eq:P2} as follows:
\begin{equation*}
  \begin{aligned}
    \sup_{u,\lambda \geq 0} \;& - \lambda\epsilon(\beta) + \frac{1}{N}
    \sum_{l\in\mathcal{L}} \inf_{{\rho} \in \mathcal{Z}(u) }
    \left(\lambda \Norm{{\rho}-{\rho}^{(l)}}+
      {H(u;{\rho})} \right) ,\\
    \st &  \; \eqref{eq:ue},\; \eqref{eq:Rhohat}. \\
	\end{aligned}
\end{equation*}
Using the definition of the dual norm and moving its $\sup$ operator
we can write the above problem as:
\begin{equation*}
	\begin{aligned}
\sup_{u,\lambda \geq 0} \;& - \lambda\epsilon(\beta) + \frac{1}{N} \sum_{l\in\mathcal{L}} \inf_{{\rho} \in \mathcal{Z}(u) } \sup_{\Norm{\nu^{(l)}}_{\star}\leq \lambda}   \\
& \hspace{2cm} \left({\left\langle \nu^{(l)} , {\rho}-{\rho}^{(l)}\right\rangle}  +{H(u;{\rho})} \right),\\
\st &  \; \eqref{eq:ue},\; \eqref{eq:Rhohat}. \\
	\end{aligned}
\end{equation*}
Given $\lambda \geq 0$, the sets $\setdef{\nu^{(l)} \in
  \real^{nT}}{{\Norm{\nu^{(l)}}_{\star}\leq \lambda}}$ are compact for
all $l\in \mathcal{L}$. We then apply the minmax theorem between
$\inf$ and the second $\sup$ operators. This results in the switch of
the operators, and by combining the two $\sup$ operators we have:
\begin{equation*}
	\begin{aligned} 
          \sup_{u,\lambda, \nu} & - \lambda\epsilon(\beta) + \frac{1}{N}
          \sum_{l\in\mathcal{L}} \inf_{{\rho} \in \mathcal{Z}(u) }
          \left({\left\langle \nu^{(l)} , {\rho}-{\rho}^{(l)}
              \right\rangle} +{H(u;{\rho})} \right),\\
          \st &  \; \eqref{eq:ue},\; \eqref{eq:Rhohat},\; \lambda \geq 0, \\
          & \Norm{\nu^{(l)}}_{\star}\leq \lambda, \; \forall l \in
          \mathcal{L}. 
	\end{aligned}
\end{equation*}
The objective function can be simplified as follows: 
\begin{equation*}
	\begin{aligned}
          - \lambda\epsilon(\beta) +
          \frac{1}{N}\sum_{l\in\mathcal{L}}{\left\langle -\nu^{(l)} ,
              {\rho}^{(l)}\right\rangle} + \frac{1}{N}
          \sum_{l\in\mathcal{L}} h^{(l)}(u),\\
	\end{aligned}
\end{equation*}
where 
\begin{equation*}
	\begin{aligned}
          h^{(l)}(u):=&\inf_{{\rho} \in \mathcal{Z}(u) }
          \left({\left\langle \nu^{(l)} , {\rho}\right\rangle}
            +{H(u;{\rho})} \right), \; \forall l\in \mathcal{L}.
	\end{aligned}
\end{equation*}
For each $l\in \mathcal{L}$, we rewrite $h^{(l)}(u)$ by firstly taking a minus sign out of the $\inf$ operator, then exploiting the equivalent representation of $\sup$ operation, and finally using the definition of conjugate functions. The function $h^{(l)}(u)$ results in the following form:
	\begin{equation*}
	\begin{aligned}					
h^{(l)}(u)=&-\sup_{{\rho} \in \mathcal{Z}(u) }
          \left({\left\langle -\nu^{(l)} , {\rho}\right\rangle}
            -{H(u;{\rho})} \right), \\
          =& -\sup_{{\rho} } \left({\left\langle -\nu^{(l)} ,
                {\rho}\right\rangle} -{H(u;{\rho})} -
            \chi_{\mathcal{Z}(u)}({\rho}) \right), \\
          =& - \left[ {H(u;\cdot)} + \chi_{\mathcal{Z}(u)}(\cdot)
          \right]^{\star}(-\nu^{(l)}).
	\end{aligned}
\end{equation*}		
Further, we apply the property of the inf-convolution operation and push the minus sign back into the $\inf$ operator, for each $h^{(l)}(u)$, $l\in \mathcal{L}$. The representation of $h^{(l)}(u)$ results in the following relation:
		\begin{equation*}
	\begin{aligned}					
h^{(l)}(u)=& - \inf_{\mu} \left( \left[ {H(u;\cdot)}\right]^{\star}
            (-\mu^{(l)}-\nu^{(l)}) \right. \\
          & \hspace{2cm} \left. +\left[ \chi_{\mathcal{Z}(u)}(\cdot)
            \right]^{\star}(\mu^{(l)}) \right), \\
          =& \sup_{\mu} \left( -\left[ {H(u;\cdot)}\right]^{\star}
            (-\mu^{(l)}-\nu^{(l)}) \right. \\
          & \hspace{2cm} \left.-\left[
              \chi_{\mathcal{Z}(u)}(\cdot)\right]^{\star}
            (\mu^{(l)}) \right). \\
	\end{aligned}
\end{equation*}
By substituting $-\nu^{(l)}$ by $\nu^{(l)}$, the resulting
optimization problem has the following form:
\begin{equation*}
	\begin{aligned}
          \sup_{u,\lambda,\mu, \nu} \; & - \lambda\epsilon(\beta)
          -\frac{1}{N} \sum_{l\in\mathcal{L}} \left( \left[
              {H(u;\cdot)}\right]^{\star}
            (-\mu^{(l)}+\nu^{(l)}) \right. \\
          & \hspace{1cm} \left. +\left[ \chi_{\mathcal{Z}(u)}(\cdot)
            \right]^{\star}(\mu^{(l)}) -{\left\langle \nu^{(l)} ,
                {\rho}^{(l)}\right\rangle} \right), \\
          \st &  \; \eqref{eq:ue},\; \eqref{eq:Rhohat}, \; \lambda \geq 0,  \\
          & \Norm{\nu^{(l)}}_{\star}\leq \lambda, \; \forall
          l \in \mathcal{L}. 
	\end{aligned}
\end{equation*}
Given $u$, the strong duality of linear programs are applicable for
the conjugate of the function ${H(u;\cdot)}$ and the support function
$\sigma_{\mathcal{Z}(u)}(\mu^{(l)})$. Using the strong duality and the
definition of the support function, we compute
\begin{equation*}
  \begin{aligned}
          \left[ {H(u;\cdot)}\right]^{\star}
          &(\nu^{(l)}-\mu^{(l)})\\
          &:=\begin{cases}
            0, & \nu^{(l)}=\mu^{(l)}+ \frac{1}{T}u \otimes \vectorones{T},\; \forall \; l\in\mathcal{L}, \\
            \infty, & {\rm{o.w.}}, \\
          \end{cases}
	\end{aligned}
      \end{equation*}
      and
      \begin{equation*}
        \begin{aligned}
          &\left[ \chi_{\mathcal{Z}(u)}(\cdot)\right]^{\star}(\mu^{(l)})=
          \sigma_{\mathcal{Z}(u)}(\mu^{(l)}) \\
          &= \begin{cases}
            \sup\limits_{\xi} & {\left\langle \mu^{(l)} , 
                \xi \right\rangle}, \\
            \st &\;  0 \leq \xi_e(t) \leq \rho^{c}_{e}(u_e),~\forall e\in\edges, \\
          \end{cases}  \\
          &= \begin{cases}
            \inf\limits_{\eta} & 
            \sum\limits_{e\in\mathcal{E},t\in\mathcal{T},l\in\mathcal{L}}{
              \overline{f}_e\overline{\rho}_e\eta_e^{(l)}(t)  }, \\
            \st & [\overline{f}+u
            \circ(\overline{\rho}-\rho^{c}(\overline{u}))]\otimes 
            \vectorones{T} \circ \eta^{(l)} \\
            & \hspace{3.5cm} -\mu^{(l)} \geq \vectorzeros{nT},  
            \; \forall \; l\in\mathcal{L}, \\
            &  \eta^{(l)} \geq \vectorzeros{nT},\; \forall \; l\in\mathcal{L}. \\
          \end{cases} 
	\end{aligned}
\end{equation*}

By substituting these functions into the objective function and take a
minus sign out of the resulting $\inf$ operator above, we obtain the
form of Problem~\eqref{eq:P3}. Given that the relations hold with
equalities, we therefore claim that~\eqref{eq:P2} is equivalent
to~\eqref{eq:P3}.

Further, given any feasible point $(u,{\rho},\lambda,\mu,\nu,\eta)$
of~\eqref{eq:P3}, we denote its objective value by $\hat{J}(u)$.  The
value $\hat{J}(u)$ is a lower bound of~\eqref{eq:P3} and therefore a
lower bound for~\eqref{eq:P2}, i.e., $\hat{J}(u) \leq {J}(u)$. Then
the value $\hat{J}(u)$ is an estimate of the certificate for the
performance guarantee~\eqref{eq:perfnew}. Therefore, $(u,\;\hat{J}(u)
)$ is a data-driven solution and certificate pair for~\eqref{eq:P1}.
\end{proof}

Problem~\eqref{eq:P3} is inherently difficult to solve due to the
discrete decision variables $u$, bi-linear terms in the first group of
constraints $u \otimes \vectorones{T} \circ \eta^{(l)}$, and the
nonlinear sample trajectories
$\{{\rho}^{(l)}\}_{l\in\mathcal{L}}$, which motivates our next section.

\section{Solution Techniques for Nonconvex
  Problem~\eqref{eq:P3}} \label{sec:Solution}
To compute high-quality solutions, we follow a two-step procedure. In
the first step, we transform Problem~\eqref{eq:P3} into a
mixed-integer bi-linear program with a linear constrained set. We call
it Problem~\eqref{eq:P4}. Finally, we propose an integer-solution
search algorithm to compute high-quality solutions to
Problem~\eqref{eq:P4}.

\noindent \textbf{Step 1:} In this step, we represent the speed limits
$u$ with a set of binary variables, and then represent each bi-linear
term that is comprised of a continuous variable and a binary variable,
with a set of linear constraints.

\textit{Binary Representation of Speed Limit $u$:} For each edge
$e\in\mathcal{E}$ and speed limit value $\gamma^{(i)} \in \Gamma$ with
$i \in \mathcal{O}:=\untilone{m}$, let us define the binary variable
$x_{e,i}$ to be equal to one if $u_e=\gamma^{(i)}$; otherwise
$x_{e,i}=0$. We will then have $u_e=\sum_{i\in \mathcal{O}}
\gamma^{(i)} x_{e,i}$ for each $e \in\mathcal{E}$. Using this
representation, we reformulate the speed limit
constraints~\eqref{eq:ue} into the following:
\begin{equation}
	\begin{aligned}
          \supscr{u}{L}_e \leq & \sum_{i \in \mathcal{O}} \gamma^{(i)}
          x_{e,i} \leq \supscr{u}{U}_e, \; \sum_{i \in
            \mathcal{O}}x_{e,i}=1,
          \; \forall e \in \edges, \\
          &x_{e,i} \in \{0,\; 1\}, \; \forall e \in \edges, \;
          i \in \mathcal{O}, \\
	\end{aligned} 	\label{eq:ueNew}
\end{equation}
and we update the sample trajectories formula~\eqref{eq:Rhohat} as
follows:
\begin{equation}
\begin{aligned}
  {\rho}^{(l)}_e(t+1)=&
  {\rho}^{(l)}_e(t) +h \omega^{(l)}_e(t) \\
  &+ h\sum_{i\in \mathcal{O}} \gamma^{(i)} (x_{s,i}
  {\rho}^{(l)}_s(t)- x_{e,i} {\rho}^{(l)}_e(t)), \\
  & \hspace{1.2cm} \;\forall e\in\edges,\;t\in\mathcal{T}
  \setminus\{T\},\;l\in\mathcal{L}, \\
\end{aligned}
	\label{eq:RhohatNew}
\end{equation}

\textit{Reformulation of Bi-linear Terms:} In Problem~\eqref{eq:P3},
there are three groups of bi-linear terms: 1) the bi-linear terms
$\nu^{(l)}_e(t){\rho}^{(l)}_e(t)$ in the objective function
written as $\nu^{(l)} \circ {\rho}^{(l)}$, 2) the bi-linear terms
$\sum_{i\in \mathcal{O}} \gamma^{(i)} x_{e,i}\eta^{(l)}_e(t)$ which
appear in the first set of constraints (e.g., $u \otimes
\vectorones{T} \circ \eta^{(l)}$), and 3) the bi-linear terms $x_{e,i}
{\rho}^{(l)}_e(t)$ in the sample trajectories
formula~\eqref{eq:RhohatNew}. In the group 2) and 3), each bi-linear
term is comprised of a continuous variable and a binary variable. In
this regard, we simplify these bi-linear terms by using the
linearization technique under the following assumption:

\begin{assumption}[Bounded dual variable $\eta$]
  There exist large enough scalar $\overline{\eta}$ such that
  $\eta^{(l)}_e(t) \leq \overline{\eta}$ for all $e \in \edges$, $t
  \in \mathcal{T} \setminus \{ 0\}$ and $l
  \in\mathcal{L}$. \label{assump:2}
\end{assumption}

\begin{proposition}[Equivalence reformulation for bi-linear terms in
  group 2) and 3)~{\cite[Section 2]{FG:75}}]\label{glov} Let
  $\mathcal{Y}\subset\mathbb{R}$ be a compact set. Given a binary
  variable $x$ and a linear function $g(y)$ in a continuous variable
  $y\in \mathcal{Y}$, $z$ equals the quadratic function $xg(y)$ if and
  only if
\begin{align}
  &\underline{g}x \le z \le \overline{g}x,\nonumber\\
  &g(y)-\overline{g}\cdot(1-x) \le z\le g(y)-\underline{g}\cdot(1-x),\nonumber
\end{align}
where $\underline{g}=\min_{y\in \mathcal{Y}}\{g(y)\}$ and
$\overline{g}=\max_{y\in \mathcal{Y}}\{g(y)\}$.  \hfill $\square$
\end{proposition}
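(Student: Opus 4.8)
The plan is to prove the equivalence by an elementary case analysis on the two admissible values of the binary variable $x$, after first recording the basic fact that makes the bounds $\underline{g}$ and $\overline{g}$ well defined. Since $\mathcal{Y}$ is compact and $g$ is linear (hence continuous), the extreme-value theorem guarantees that $\underline{g}=\min_{y\in\mathcal{Y}}g(y)$ and $\overline{g}=\max_{y\in\mathcal{Y}}g(y)$ are attained and finite, and that $\underline{g}\le g(y)\le\overline{g}$ for every $y\in\mathcal{Y}$. This inequality is the only structural property of $g$ and $\mathcal{Y}$ that the argument uses.

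For the forward implication I would assume $z=xg(y)$ and verify the four inequalities in each of the two cases $x\in\{0,1\}$. When $x=0$ we have $z=0$, so the first pair reads $0\le 0\le 0$, while the second pair reads $g(y)-\overline{g}\le 0\le g(y)-\underline{g}$, which holds precisely because $\underline{g}\le g(y)\le\overline{g}$. When $x=1$ we have $z=g(y)$; the second pair collapses to $g(y)\le g(y)\le g(y)$, and the first pair, $\underline{g}\le g(y)\le\overline{g}$, is once more the recorded bound.

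For the reverse implication I would show that in each case exactly one of the two constraint pairs pins $z$ to the correct value while the other is automatically consistent. If $x=0$, the first pair forces $\underline{g}\cdot 0\le z\le\overline{g}\cdot 0$, i.e.\ $z=0=xg(y)$. If $x=1$, the second pair forces $g(y)\le z\le g(y)$, i.e.\ $z=g(y)=xg(y)$. Combining the two directions yields the stated equivalence.

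There is no genuine obstacle here: the result is a standard Glover-type linearization, and the content is entirely the bookkeeping of which inequality becomes binding for which value of $x$. The only point requiring care is to observe that the roles of the two constraint pairs switch with $x$: the box constraint $\underline{g}x\le z\le\overline{g}x$ does the work when $x=0$ and is slack when $x=1$, whereas the shifted constraint does the work when $x=1$ and is slack when $x=0$, so that neither pair is redundant and compactness of $\mathcal{Y}$ is exactly what keeps all four bounds finite.
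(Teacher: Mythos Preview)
Your argument is correct and complete: the case analysis on $x\in\{0,1\}$, together with the bound $\underline{g}\le g(y)\le\overline{g}$ from compactness of $\mathcal{Y}$ and continuity of $g$, establishes both directions cleanly. The paper, however, does not supply its own proof of this proposition; it simply cites Glover's original work and closes the statement with a $\square$, so there is nothing to compare against beyond noting that your proof is the standard one.
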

Applying Proposition~\ref{glov}, we introduce variables
$z^{(l)}_{e,i}(t)$ to represent the bi-linear terms
$x_{e,i}\eta^{(l)}_e(t)$ in the group 2), via the following
constraints:
\begin{equation}
\begin{array}{l}
  \sum_{i\in \mathcal{O}}z^{(l)}_{e,i}(t)=\eta^{(l)}_e(t), \; \\
  \hspace{2cm}\;  
  \forall e\in\edges, \; t\in\mathcal{T}\setminus\{0\},\; 
  l\in\mathcal{L}, \\
  0\leq z^{(l)}_{e,i}(t) \leq \overline{\eta} x_{e,i}, \\
  \hspace{2cm}\; \forall e\in\edges,\; i\in\mathcal{O},\;
  t\in\mathcal{T}\setminus\{0\},\;l\in\mathcal{L}, \\
  \eta^{(l)}_e(t) -\overline{\eta}(1- x_{e,i}) \leq z^{(l)}_{e,i}(t) 
  \leq \eta^{(l)}_e(t), \\
  \hspace{2cm}\; \forall e\in\edges,\; i\in\mathcal{O},\;t
  \in\mathcal{T}\setminus\{0\},\;l\in\mathcal{L}. \\
\label{eq:z}							
\end{array}
\end{equation}
Similarly, we introduce variables $y^{(l)}_{e,i}(t)$ to represent the
bi-linear terms $x_{e,i}{\rho}^{(l)}_e(t)$ in the group 3), via
the following constraints:
\begin{equation}
	\begin{array}{l}
          y^{(l)}_{e,i}(0)=x_{e,i}{\rho}^{(l)}_e(0), \\
          \hspace{2cm} \; 
          \forall e\in\edges,\; l\in\mathcal{L}, \; 
          i\in\mathcal{O},\\		
          {\rho}^{(l)}_e(t) -
          \overline{\rho}_e(1- x_{e,i}) \leq y^{(l)}_{e,i}(t) 
          \leq \overline{\rho}_e x_{e,i}, \\
          \hspace{2cm} \; \forall e\in\edges,\; i
          \in\mathcal{O},\;t\in\mathcal{T}\setminus\{0\},\;l
          \in\mathcal{L}, \\
	\end{array} \label{eq:G}
\end{equation}
\begin{equation}
\begin{array}{ll}
  \sum_{i\in \mathcal{O}}y^{(l)}_{e,i}(t)={\rho}^{(l)}_e(t), \\
  \hspace{2cm}\; \forall e\in\edges, \; 
  t\in\mathcal{T}\setminus\{0\},\; l\in\mathcal{L}, \\
  0\leq y^{(l)}_{e,i}(t) \leq {\rho}^{(l)}_e(t), \\
  \hspace{2cm}\; \forall e\in\edges,\; i\in\mathcal{O},
  \;t\in\mathcal{T}\setminus\{0\},\;l\in\mathcal{L}. \\
 \end{array}
\label{eq:y}
\end{equation}
Using variables $y^{(l)}_{e,i}(t)$ to reformulate the sample
trajectories formula~\eqref{eq:RhohatNew}, we have the following
constraints:
\begin{equation}
\begin{aligned}
  {\rho}^{(l)}_e(t+1)=
  &{\rho}^{(l)}_e(t)+h \omega^{(l)}_e(t)  \\
  &\; +h\sum_{i\in \mathcal{O}} \gamma^{(i)} (y^{(l)}_{s,i}(t)- y^{(l)}_{e,i}(t)),\\
  & \hspace{1cm} \;\forall e\in\edges,\;t\in\mathcal{T}
  \setminus\{T\},\;l\in\mathcal{L}, \\
\end{aligned} \label{eq:RhohatNew2}
\end{equation}
By the above reformulation, the bi-linear terms in group 2) and 3)
will be linear, and Problem~\eqref{eq:P3} can be equivalently written
as the following optimization problem:
{\leqnomode
\begin{align} \small
  \label{eq:P4}\tag{$\mathbf{P4}$}~ \max\limits_{\substack{x,y,z,
      {\rho}, \\ \lambda,\mu,\nu,\eta }} -\lambda \epsilon(\beta)
  - \frac{1}{N}\sum_{e,t,l}{\overline{f}_e\overline{\rho}_e\eta_e^{(l)}(t)
    +\nu^{(l)}_e(t){\rho}^{(l)}_e(t)},
\end{align}}  
{\reqnomode
\begin{align}
  \st \quad & \sum_{i\in \mathcal{O}} \gamma^{(i)}
  (\overline{\rho}-\rho^{c}(\overline{u}))\otimes
  \vectorones{T} \circ z^{(l)}_i  -\mu^{(l)} \nonumber \\
  & \hspace{1.8cm} +\overline{f}\otimes \vectorones{T} \circ
  \eta^{(l)} \geq \vectorzeros{nT}, \; \forall \;
  l\in\mathcal{L}, \label{eq:dual1} \\
  & \nu^{(l)} =\mu^{(l)}+ \frac{1}{T}\sum_{i\in \mathcal{O}} \gamma^{(i)}x_{i}
  \otimes \vectorones{T},\; \forall \;
  l\in\mathcal{L}, \label{eq:dual2} \\
  & \Norm{\nu^{(l)}}_{\star} \leq \lambda, \; \forall
  \; l\in\mathcal{L}, \label{eq:dual3} \\
  & \vectorzeros{nT} \leq \eta^{(l)} \leq \overline{\eta},
  \; \forall \; l\in\mathcal{L},\label{eq:dual4} \\
  & { \textbf{\small speed limits}}\; \eqref{eq:ueNew},
  \;{\textbf{\small dual variable}} \; \eqref{eq:z}, \nonumber \\
  & {\textbf{\small sample trajectories}} \{\eqref{eq:G}, \;
  \eqref{eq:y}, \; \eqref{eq:RhohatNew2} \}. \nonumber
\end{align}} 
Further, let
$\hat{J}(u)$ denote the value of the objective function
of~\eqref{eq:P4} at a computed feasible solution
$(x,y,z,{\rho},\lambda,\mu,\nu,\eta)$. Then, the resulting speed
limits $u:={\sum_{i\in \mathcal{O}} \gamma^{(i)}
  (x_{1,i},\ldots,x_{n,i})}$ provide a data-driven solution such that
$(u,\;\hat{J}(u))$ satisfies the performance
guarantee~\eqref{eq:perfnew}.

\noindent \textbf{Step 2:} Problem~\eqref{eq:P4} is computationally
intractable since its objective function is still nonlinear in its
arguments due to the bi-linear terms $\{\nu^{(l)} \circ
{\rho}^{(l)}\}_{l \in\mathcal{L}}$. To compute high-quality feasible
solutions to \eqref{eq:P4}, we propose an integer-solution search
algorithm. The proposed algorithm is a prototype of the
decomposition-based methods in the
literature~\cite{LX-TA-BP:11,LD-LX:16}. These methods can handle
specialized mix-integer nonlinear programs and achieve suboptimal
solutions efficiently.

We propose an integer-solution search algorithm as shown in
Algorithm~\ref{Alg:issa}. The idea of the algorithm is to iteratively
solve 1) upper-bounding problems to~\eqref{eq:P4}, and 2) lower-bounding
problems to~\eqref{eq:P4}, until a stopping criteria is met. In each
iteration $k$ of this process, we construct an upper-bounding
problem~\eqref{eq:UBPk} through McCormick relaxations of the bi-linear
terms $\{\nu^{(l)} \circ {\rho}^{(l)}\}_{l \in\mathcal{L}}$. This
upper bounding problem is a mixed-integer linear program and its
solution  gives the upper bound of~\eqref{eq:P4} and candidate
variable speed limits $x^{(k)}$. These $x^{(k)}$ can be used to
construct sample trajectories $\{{\rho}^{(l,k)}\}_{l \in
  \mathcal{L}}$ and a linear lower-bounding problem~\eqref{eq:LBPk}
for potential feasible solutions of~\eqref{eq:P4}.
\begin{algorithm}
\caption{Integer solution search algorithm} \label{Alg:issa}
\begin{algorithmic}[1] 
  \State Initialize $k=0$ \Repeat \State $k \leftarrow k+1$ \State
  Solve Problem~\eqref{eq:UBPk}, \Return $x^{(k)}$ and $\UB_k$ \State
  Generate sample trajectories $\{{\rho}^{(l,k)}\}_{l \in
    \mathcal{L}}$ \State Solve Problem~\eqref{eq:LBPk}, \Return
  $\obj_{k}$ and $\LB_k$ \Until{${\UB}_k -{\LB}_k \leq \epsilon$,
    or~\eqref{eq:UBPk} is infeasible, or a satisfactory suboptimal solution is found after certain running time $\subscr{T}{run}$}\\
  \Return data driven solution $\subscr{u}{best}:=u^{(q)}$ with
  certificate $\hat{J}(u^{(q)})$ such that $q \in
  \argmax_{p=1,\ldots,k} \{ {\obj}_p\}$
\end{algorithmic} 
\end{algorithm}

\textit{Upper-bounding Problems:} At each iteration $k$, the
upper-bounding problem~\eqref{eq:UBPk} is constructed using two extra
ingredients: 1) a McCormick relaxation of the bi-linear terms
$\{\nu^{(l)}_e(t){\rho}^{(l)}_e(t)\}_{e\in\mathcal{E},t\in\mathcal{T},l\in\mathcal{L}}$
in the objective function of~\eqref{eq:P4}, and 2) canonical integer
cuts that exclude the previous visited candidate variable speed limits
$\{ x^{(p)} \}_{p=1}^{k-1}$.

1) The McCormick envelope~\cite{MG:76} provides relaxations
of bi-linear terms, which is stated in the
following remark:
\begin{remark}[McCormick envelope]
  Consider two variables $x, \; y\; \in \real$ with upper and lower
  bounds, $\underline{x} \leq x \leq \overline{x}$, $\underline{y}
  \leq y \leq \overline{y}$. The McCormick envelope of the variable
  $s:=xy \in \real$ is characterized by the following constraints:
\begin{equation*}
  \begin{aligned}
    s\geq \overline{x}y+ x\overline{y}-\overline{x}\overline{y},
    \quad&
    s\geq \underline{x}y+x\underline{y}-\underline{x}\underline{y}, \\
    s\leq
    \overline{x}y+x\underline{y}-\overline{x}\underline{y},\quad&s\leq
    \underline{x}y+x\overline{y}-\underline{x}\overline{y}.
  \end{aligned}
\end{equation*}
\end{remark}
To construct a McCormick envelope for~\eqref{eq:UBPk}, let us denote
$\overline{\nu}_e:= \overline{u}_{e} \left(T^{-1} +
  \overline{\rho}_{e}\overline{\eta} \right)$ with
$e\in\mathcal{E}$. For each $e \in\mathcal{E}$, $t \in\mathcal{T}$ and
$l \in\mathcal{L}$, we have $0 \leq \nu^{(l)}_e(t) \leq
\overline{\nu}_e$, $0 \leq {\rho}^{(l)}_e(t) \leq
\overline{\rho}_e$, and the McCormick envelope of $s^{(l)}_e(t):=
\nu^{(l)}_e(t){\rho}^{(l)}_e(t)$ is given by
\begin{equation}	
	\begin{aligned}
          s^{(l)}_e(t) & \geq \overline{\nu}_e {\rho}^{(l)}_e(t)+
          \nu^{(l)}_e(t)\overline{\rho}_e-\overline{\nu}_e
          \overline{\rho}_e, \\
          s^{(l)}_e(t) & \geq 0, \\
          s^{(l)}_e(t)&\leq \overline{\nu}_e {\rho}^{(l)}_e(t),\\
          s^{(l)}_e(t)&\leq \nu^{(l)}_e(t)\overline{\rho}_e.
	\end{aligned} \label{eq:McR}
\end{equation}

2) The canonical integer cuts prevent~\eqref{eq:UBPk} from choosing
examined candidate variable speed limits $\{ x^{(p)}
\}_{p=1}^{k-1}$. Let $\Omega^{(p)}:=\setdef{(e,i) \in \mathcal{E}
  \times \mathcal{O}}{x^{(p)}_{e,i}=1}$ denote the index set of $x$
for which the value $x^{(p)}_{e,i}$ is $1$ at the previous iteration
$p$. Let $c^{(p)}:=|\Omega^{(p)}|$ denote the cardinality of the set
$\Omega^{(p)}$ and let $\overline{\Omega}^{(p)}:= \left(\mathcal{E}
  \times \mathcal{O}\right)\setminus \Omega^{(p)}$ denote the
complement of $\Omega^{(p)}$. The canonical integer cuts of
Problem~\eqref{eq:UBPk} at iteration $k$ are given by:
\begin{equation}
  \begin{aligned}
    &\sum_{ (e,i) \in \Omega^{(p)} } x_{e,i} - 
    \sum_{ (e,i) \in \overline{\Omega}^{(p)} } x_{e,i} \leq c^{(p)} -1, \\
    &\hspace{4cm} \forall p \in \untilone{k-1}.
	\end{aligned} \label{eq:cut}
\end{equation}

At each iteration $k$, the upper-bounding problem~\eqref{eq:UBPk} has
the following form: {\leqnomode
\begin{align} 
  \label{eq:UBPk} \tag{{UBP}$_k$}~&\max\limits_{\substack{x,y,z,s,
      {\rho}, \\ \lambda,\mu,\nu,\eta }}
  -\lambda \epsilon(\beta) - \frac{1}{N}\sum_{e,t,l}{\overline{f}_e\overline{\rho}_e\eta_e^{(l)}(t)  +s^{(l)}_e(t)}, \\
	  \st \quad & {\textbf{\small speed limits}}\; \eqref{eq:ueNew}, \; {\textbf{\small sample trajectories }}  \{\eqref{eq:G}, \; \eqref{eq:y}, \; \eqref{eq:RhohatNew2} \}  \nonumber\\
  &  {\textbf{\small no congestion}} \;  \{\eqref{eq:z},\; \eqref{eq:dual1}, \;\eqref{eq:dual2}, \;\eqref{eq:dual3}, \;\eqref{eq:dual4}\}, \nonumber \\
	&  {\textbf{\small McCormick envelope }} \;\eqref{eq:McR}, \; {\textbf{\small integer cuts}} \; \eqref{eq:cut}. \nonumber
\end{align}} 
We denote by $\UB_k$ the optimal objective value
of~\eqref{eq:UBPk} and $\UB_k$ is an upper bound of the original
nonconvex problem~\eqref{eq:P4}. We denote by $x^{(k)}$ the integer
part of the optimizers of~\eqref{eq:UBPk} and use it as a candidate
speed limit in the lower-bounding problem LBP$_k$ of~\eqref{eq:P4}.

\textit{Lower-bounding Problems:} To exploit the structure
of~\eqref{eq:P4} and find lower-bounding problems, let us define the
set $\Phi(x):=\setdef{(z,\lambda,\mu,\nu,\eta)}{\textbf{no congestion}
}$, $\Psi(x):=\setdef{(y,{\rho})}{\textbf{sample
    trajectories} 
}$ and $X:=\setdef{x}{\textbf{speed limits} 
}$. Problem~\eqref{eq:P4} can be equivalently written as: {\leqnomode
  \begin{align}  ~\max\limits_{\substack{x,y,z, {\rho}, \\
        \lambda,\mu,\nu,\eta }} & -\lambda \epsilon(\beta) -
    \frac{1}{N}\sum_{e,t,l}{\overline{f}_e
      \overline{\rho}_e\eta_e^{(l)}(t)  +\nu^{(l)}_e(t){\rho}^{(l)}_e(t)}, \nonumber\\
    \st \quad & (z,\lambda,\mu,\nu,\eta) \in \Phi(x), \;
    (y,{\rho})\in\Psi(x),\; x\in X. \nonumber
  \end{align}} Given $x^{(k)}\in X$ solved by~\eqref{eq:UBPk} at
iteration $k$, we have a candidate speed limit $u^{(k)}:={\sum_{i\in
    \mathcal{O}} \gamma^{(i)}
  (x^{(k)}_{1,i},\ldots,x^{(k)}_{n,i})}$. For each $l\in\mathcal{L}$
with given $u^{(k)}$, the sample trajectory ${\rho}^{(l)}$ is uniquely
determined by $(\rho^{(l)}(0), \omega^{(l)})$, via the uniqueness
solution of the linear time-invariant systems. Therefore, the element
$(y,{\rho}) \in\Psi(x^{(k)})$ is unique.  Using the constraints set
$\Psi(x^{(k)})$, we then construct the unique sample trajectories
$\{{\rho}^{(l,k)}\}_{l \in \mathcal{L}}$. The unique sample
trajectories enable us to define the linear lower bounding problem at
iteration $k$, as follows: {\leqnomode
\begin{align}
  \label{eq:LBPk} \tag{{LBP}$_k$}~\max\limits_{\substack{z,
      \lambda,\mu,\nu,\eta }} &-\lambda \epsilon(\beta) -
  \frac{1}{N}\sum_{e,t,l}{\overline{f}_e
    \overline{\rho}_e\eta_e^{(l)}(t)  +\nu^{(l)}_e(t){\rho}^{(l,k)}_e(t)}, \\
  \st \quad & (z,\lambda,\mu,\nu,\eta) \in \Phi(x^{(k)}). \nonumber
\end{align}} Let $\obj_{k}$ 
denote the optimal objective value
of~\eqref{eq:LBPk}. If Problem~\eqref{eq:LBPk} is solved to optimum
with a finite $\obj_{k}$, we then obtain a feasible solution
of~\eqref{eq:P4} with speed limit $u^{(k)}:={\sum_{i\in \mathcal{O}}
  \gamma^{(i)} (x^{(k)}_{1,i},\ldots,x^{(k)}_{n,i})}$ and certificate
$\hat{J}(u^{(k)}):={\obj}_{k}$. Otherwise, Problem~\eqref{eq:LBPk}
is either infeasible or unbounded and we let $\obj_{k}=-\infty$. The
lower bound of~\eqref{eq:P4} is then calculated by $\LB_k=
\max_{p=1,\ldots,k}\{ {\obj}_{p} \}$. The stopping criteria of the
algorithm can be determined by 1) ${\UB}_k -{\LB}_k \leq
\epsilon$, or 2)~\eqref{eq:UBPk} is infeasible, or 3) a satisfactory
suboptimal solution is found after certain running time
$\subscr{T}{run}$.  We refer to~\cite{LX-TA-BP:11} for the finite
convergence of Algorithm~\ref{Alg:issa} to a global $\epsilon$-optimal
solution using both the first and second stopping criteria. To find a
potentially good feasible solution within certain running time
$\subscr{T}{run}$, we further propose the third criteria. A
satisfactory suboptimal solution after running time $\subscr{T}{run}$
is then a feasible solution that achieves the lower bound of the
algorithm. If no feasible solution is found within time
$\subscr{T}{run}$, we wait until a feasible solution is obtained.
	
\vspace*{-0.25cm}
\section{Simulations}\label{sec:Sim}
In this section, we demonstrate in an example how to find a solution
to~\eqref{eq:P4} that results in a data-driven variable-speed limit $u
\in \real^5$ with performance guarantee~\eqref{eq:perfnew}. We
consider a highway with length $L=10\rm{km}$ and we divide it into
$n=5$ segments. Let the unit size of each time slot $\delta=30\sec$
and consider $T=20$ time slots for a $10$min planning horizon.  For
each edge $e \in \mathcal{E}$, we assume a jam density of
$\overline{\rho}_e=1050\rm{vec/km}$\footnote{The unit ``vec'' stands
  for ``vehicles''.}, a capacity of $\overline{f}_e=3.1\times
10^4\rm{vec/h}$ and a maximal free flow of
$\overline{u}_e=140\rm{km/h}$. Let us consider $m=5$ different
candidate speed limits $\Gamma=\{40{\rm{km/h}},\;
60{\rm{km/h}},\;80{\rm{km/h}},\;100{\rm{km/h}},\;120{\rm{km/h}}
\}$. On the $\supscr{4}{th}$ edge $e:=(3,4) \in \mathcal{E}$, we
assume an accident happens during $\mathcal{T}$ with parameters
$\supscr{f}{U}_e=2.7\times 10^4\rm{vec/h}$ and
$\supscr{\rho}{U}_e=\overline{\rho}_e$. To evaluate the effect of the
proposed algorithm, samples of the random variables $w$ and $\rho(0)$
are needed.
In real-case studies, samples $\{\rho^{(l)}(0)\}_{l\in\mathcal{L}}$
can be obtained from road sensors (loop detectors), while samples of
the uncertain flows $\{\omega^{(l)}\}_{l\in\mathcal{L}}$ can be
constructed either from a database of flow data on the road, or from
the current measurements of ramp flows with the assumption that the
stochastic process $\{\omega(t)\}_{t\in\mathcal{T}}$ is stationary.

In this simulation example, the index set of accessible samples is given by $\mathcal{L}=\{1,2,3\}$. For each $l\in\mathcal{L}$, let us assume
that each segment $e\in\mathcal{E}$ initially operates under a free
flow condition with an initial density
$\rho^{(l)}_e(0)=260\rm{vec/km}$. For each edge
$e\in\mathcal{E}\setminus \{ 1\}$ and time $t\in\mathcal{T}$, we will
assume that samples $\{\omega^{(l)}_e(t)\}_{l\in\mathcal{L}}$ are
generated from a uniform distribution within interval
$[-1500,2500]\rm{vec/h}$. 
To ensure significant inflows of the system, we further let the
samples $\{\omega^{(l)}_1(t)\}_{l\in\mathcal{L}}$ of the first segment
to be chosen from the uniform distribution within interval $[2\times
10^4,2.4\times 10^4]\rm{vec/h}$. 
	We also let the confidence level be $\beta=0.95$ and the radius
of the Wasserstein Ball $\epsilon(\beta)=0.985$ as calculated
in~\cite{DL-SM:18-extended}.

To generate feasible solutions that can be carried out for a real time
transportation system, we allocate $\subscr{T}{run}=5$min execution
time to the proposed Algorithm~\ref{Alg:issa}, and run it on a machine
with $3.4$GHz CPU and $4$G RAM. In $5$ minutes, the algorithm computed
$5$ feasible candidate speed limits and discarded $13$ infeasible
candidate speed limits. The feasible solutions were obtained after
$120\sec$, $138\sec$, $174\sec$, $189\sec$ and
$270\sec$, 
respectively. 
We verified that $\hat{J}(u^{(3)})=2.435\times 10^4\rm{vec/h}$ is the
highest certificate obtained, i.e., $\hat{J}(u^{(3)}) \in
\argmax_{p=1,\ldots,5} \{ \hat{J}(u^{(p)}) \;| \; u^{(p)}
  \text{ is feasible} \}$, 
	and the desired speed
limits are $u^{(3)}=[100,120,100,80,120]{\rm{km/h}}$. The algorithm
terminated at iteration $k=18$, with bounds $\LB_{k}= \hat{J}(u^{(3)})$ 
and $\UB_{k}=9.0 \times
10^7\rm{vec/h}$. It can be seen that the upper bound of the algorithm
is loose, but the implementable solutions can be obtained in
reasonable computational time. With knowledge of the underlying
distribution, we see that the value of the certificate averaged on segments, given by
$\hat{J}(u^{(3)})/5$, 
	is higher than the upper
bound of the random flows injected in the first segment of the
highway. This indicates that, with $95\%$ confidence, the speed limit
$u^{(3)}$ guarantees no congestion flows along the highway although
initially the highway is congested.

To evaluate the out-of-sample performance of the speed limits
$u^{(3)}$, we generated $\subscr{N}{val}= 10^3$ validation samples of
$(\omega, \rho(0))$ 
and simulated the cell transmission
model~\cite{CFD:94} with the same parameter settings but a $30$min
time horizon.
Fig.~\ref{fig:result_density} 
shows the average of the sample trajectories over time, i.e., the
function $\frac{1}{\subscr{N}{val}}\sum_{l\in
  \untilone{\subscr{N}{val}}} {\rho}_e^{(l)}(t)$ for each segment $e$,
with and without speed limits. For the density evolution with speed
limits $u^{(3)}$, we verified that the density trajectory of accident
edge $(4)$ did not exceed its critical density
$\rho_4^{c}(80\rm{km/h})=335\rm{vec/km}$ and thus the road
$\mathcal{G}$ kept free of congestion in this planning horizon
$\mathcal{T}$. However, for the density evolution without speed
limits, vehicles were accumulated on edge $(4)$ and the congestion was
propagated along edges of the road $\mathcal{G}$. 

\begin{figure}[tbp]%
\centering
\includegraphics[width=0.5\textwidth]{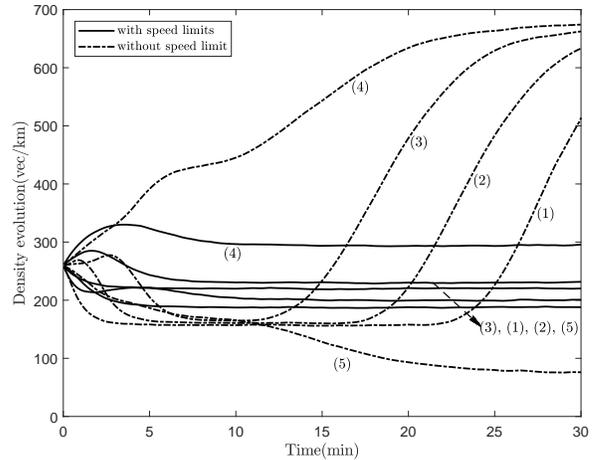}%
\caption{\small Density evolution of each segment $e$, with and
  without speed limits $u^{(3)}$. Each trajectory corresponds to a
  segment $e\in \{(1),(2),\ldots,(5) \}$. The pointed down arrow
  indicates that from top to bottom the density evolutions are for
  segments $(3)$, $(1)$,$(2)$ and $(5)$, respectively.}%
\label{fig:result_density}%
\end{figure}

\section{Conclusions}\label{sec:Conclude}
In this paper, we proposed a traffic model that considered uncertain
inflow, outflow, and random events along a highway. We then formulate
a control problem in form of~\eqref{eq:P}, where realizations of
the unknown inflows and outflows are employed to derive data-driven
variable speed limits that have guaranteed out-of-sample
performance. We achieved this by adopting DRO theory to the equivalent
Problem~\eqref{eq:P1}, which further results into the mix-integer
bi-linear Problem~\eqref{eq:P4}. Problem~\eqref{eq:P4} is solved by
means of a proposed integer-solution search algorithm that is derived from decomposition-based method. 
	The focus of our current work is on
considering more complex traffic networks and the use of moving
horizons to derive data-driven variable speed limits by leveraging
real-time dynamic data.

 \bibliographystyle{IEEEtran}
\bibliography{alias,SMD-add,SM}

\begin{thebibliography}{10}
\providecommand{\url}[1]{#1}
\csname url@samestyle\endcsname
\providecommand{\newblock}{\relax}
\providecommand{\bibinfo}[2]{#2}
\providecommand{\BIBentrySTDinterwordspacing}{\spaceskip=0pt\relax}
\providecommand{\BIBentryALTinterwordstretchfactor}{4}
\providecommand{\BIBentryALTinterwordspacing}{\spaceskip=\fontdimen2\font plus
\BIBentryALTinterwordstretchfactor\fontdimen3\font minus
  \fontdimen4\font\relax}
\providecommand{\BIBforeignlanguage}[2]{{%
\expandafter\ifx\csname l@#1\endcsname\relax
\typeout{** WARNING: IEEEtran.bst: No hyphenation pattern has been}%
\typeout{** loaded for the language `#1'. Using the pattern for}%
\typeout{** the default language instead.}%
\else
\language=\csname l@#1\endcsname
\fi
#2}}
\providecommand{\BIBdecl}{\relax}
\BIBdecl

\bibitem{SM-RC-LJ:17}
M.~Schmitt, C.~Ramesh, and J.~Lygeros, ``Sufficient optimality conditions for
  distributed, non-predictive ramp metering in the monotonic cell transmission
  model,'' \emph{Transportation Research Part B: Methodological}, vol. 105, pp.
  401--422, 2017.

\bibitem{BF-HD-GG-BA:17}
F.~Belletti, D.~Haziza, G.~Gomes, and A.~M. Bayen, ``Expert level control of
  ramp metering based on multi-task deep reinforcement learning,'' \emph{IEEE
  Transactions on Intelligent Transportation Systems}, 2017.

\bibitem{RC-BLN-PM:17}
C.~Roncoli, N.~Bekiaris-Liberis, and M.~Papageorgiou, ``Lane-changing feedback
  control for efficient lane assignment at motorway bottlenecks,''
  \emph{Transportation Research Record: Journal of the Transportation Research
  Board}, no. 2625, pp. 20--31, 2017.

\bibitem{GE-MS:18}
E.~Gravelle and S.~Mart{\'\i}nez, ``Distributed dynamic lane reversal and
  rerouting for traffic delay reduction,'' \emph{International Journal of
  Control}, vol.~91, no.~10, pp. 2355--2365, 2018.

\bibitem{JS-SK:18}
S.~Jafari and K.~Savla, ``On structural properties of feedback optimal control
  of traffic flow under the cell transmission model,'' \emph{arXiv preprint
  arXiv:1805.11271}, 2018.

\bibitem{JA-PI-PM-DS:17}
A.~Jamshidnejad, I.~Papamichail, M.~Papageorgiou, and B.~De~Schutter,
  ``Sustainable model-predictive control in urban traffic networks: efficient
  solution based on general smoothening methods,'' \emph{IEEE Transactions on
  Control Systems Technology}, 2017.

\bibitem{CS-GEA-AM_BC:16}
S.~Coogan, E.~A. Gol, M.~Arcak, and C.~Belta, ``Traffic network control from
  temporal logic specifications,'' \emph{IEEE Transactions on Control of
  Network Systems}, vol.~3, no.~2, pp. 162--172, 2016.

\bibitem{SM-LJ:18}
M.~Schmitt and J.~Lygeros, ``An exact convex relaxation of the freeway network
  control problem with controlled merging junctions,'' \emph{Transportation
  Research Part B: Methodological}, vol. 114, pp. 1--25, 2018.

\bibitem{WC-BA-MA:18}
C.~Wu, A.~M. Bayen, and A.~Mehta, ``Stabilizing traffic with autonomous
  vehicles,'' in \emph{{IEEE} Int. Conf. on Robotics and Automation}.\hskip 1em
  plus 0.5em minus 0.4em\relax IEEE, 2018, pp. 1--7.

\bibitem{CG-LE-SK:16}
G.~Como, E.~Lovisari, and K.~Savla, ``Convexity and robustness of dynamic
  traffic assignment and freeway network control,'' \emph{Transportation
  Research Part B: Methodological}, vol.~91, pp. 446--465, 2016.

\bibitem{FS-IM-MS-MM:17}
F.~Soriguera, I.~Mart{\'\i}nez, M.~Sala, and M.~Men{\'e}ndez, ``Effects of low
  speed limits on freeway traffic flow,'' \emph{Transportation Research Part C:
  Emerging Technologies}, vol.~77, pp. 257--274, 2017.

\bibitem{AYY-MR-MAD:17}
A.~Yaz{\i}c{\i}o{\u{g}}lu, M.~Roozbehani, and M.~Dahleh, ``Resilient operation
  of transportation networks via variable speed limits,'' in \emph{{A}merican
  {C}ontrol {C}onference}, 2017, pp. 5623--5628.

\bibitem{HY-HA-etl:17}
Y.~Han, A.~Hegyi, Y.~Yuan, S.~Hoogendoorn, M.~Papageorgiou, and C.~Roncoli,
  ``Resolving freeway jam waves by discrete first-order model-based predictive
  control of variable speed limits,'' \emph{Transportation Research Part C:
  Emerging Technologies}, vol.~77, pp. 405--420, 2017.

\bibitem{IP-MP-IS:18}
I.~Papamichail, M.~Papageorgiou, and I.~Stamatakis, ``Feedback traffic control
  at highway work zones using variable speed limits,'' \emph{IFAC Papers
  Online}, vol.~51, no.~9, pp. 329 -- 336, 2018, 15th IFAC Symposium on Control
  in Transportation Systems CTS 2018.

\bibitem{CFD:94}
C.~F. Daganzo, ``The cell transmission model: A dynamic representation of
  highway traffic consistent with the hydrodynamic theory,''
  \emph{Transportation Research Part B: Methodological}, vol.~28, no.~4, pp.
  269--287, 1994.

\bibitem{SM-GP-GA-LJ:15}
M.~Schmitt, P.~Goulart, A.~Georghiou, and J.~Lygeros, ``Flow-maximizing
  equilibria of the cell transmission model,'' in \emph{{E}uropean {C}ontrol
  {C}onference}, 2015, pp. 2634--2639.

\bibitem{WD-BS-etl:10}
D.~B. Work, S.~Blandin, O.-P. Tossavainen, B.~Piccoli, and A.~M. Bayen, ``A
  traffic model for velocity data assimilation,'' \emph{Applied Mathematics
  Research eXpress (AMRX)}, vol. 2010, no.~1, pp. 1--35, 2010.

\bibitem{JCH-DBW-RH-XJB-QJ-AMB:10}
J.~Herrera, D.~Work, R.~Herring, X.~Ban, Q.~Jacobson, and A.~Bayen,
  ``Evaluation of traffic data obtained via {GPS}-enabled mobile phones: {T}he
  {M}obile {C}entury field experiment,'' \emph{Transportation Research Part C:
  Emerging Technologies}, vol.~18, no.~4, pp. 568--583, 2010.

\bibitem{AC-JC:17}
A.~Cherukuri and J.~Cort\'es, ``Data-driven distributed optimization using
  {W}asserstein ambiguity sets,'' in \emph{Allerton Conf. on Communications,
  Control and Computing}, Oct 2017, pp. 38--44.

\bibitem{DL-SM:18-extended}
D.~Li and S.~Mart{\'\i}nez, ``Online data assimilation in distributionally
  robust optimization,'' \emph{arXiv preprint arXiv:180307984}, 2018.

\bibitem{RG-AJK:16}
R.~Gao and A.~J. Kleywegt, ``Distributionally robust stochastic optimization
  with {W}asserstein distance,'' \emph{arXiv preprint arXiv:1604.02199}, 2016.

\bibitem{PME-DK:17}
P.~M. Esfahani and D.~Kuhn, ``Data-driven distributionally robust optimization
  using the {W}asserstein metric: performance guarantees and tractable
  reformulations,'' \emph{Mathematical Programming}, pp. 1--52, 2017.

\bibitem{RTR-RJBW:98}
R.~T. Rockafellar and R.~J.-B. Wets, \emph{Variational analysis}.\hskip 1em
  plus 0.5em minus 0.4em\relax Springer, 1998.

\bibitem{NG-FCD:08}
N.~Geroliminis and C.~F. Daganzo, ``Existence of urban-scale macroscopic
  fundamental diagrams: Some experimental findings,'' \emph{Transportation
  Research Part B: Methodological}, vol.~42, no.~9, pp. 759--770, 2008.

\bibitem{NF-AG:15}
N.~Fournier and A.~Guillin, ``On the rate of convergence in {W}asserstein
  distance of the empirical measure,'' \emph{Probability Theory and Related
  Fields}, vol. 162, no. 3-4, pp. 707--738, 2015.

\bibitem{AS-MG-MAL:01}
A.~Shapiro, \emph{On duality theory of conic linear problems}.\hskip 1em plus
  0.5em minus 0.4em\relax Springer US, 2001, pp. 135--165.

\bibitem{FG:75}
F.~Glover, ``Improved linear integer programming formulations of nonlinear
  integer problems,'' \emph{Management Science}, vol.~22, no.~4, pp. 455--460,
  1975.

\bibitem{LX-TA-BP:11}
X.~Li, A.~Tomasgard, and P.~I. Barton, ``Nonconvex generalized {B}enders
  decomposition for stochastic separable mixed-integer nonlinear programs,''
  \emph{Journal of Optimization Theory \& Applications}, vol. 151, no.~3, p.
  425, 2011.

\bibitem{LD-LX:16}
D.~Li and X.~Li, ``Domain reduction for {B}enders decomposition based global
  optimization,'' \emph{Computers \& Chemical Engineering}, vol.~93, pp.
  248--265, 2016.

\bibitem{MG:76}
G.~P. McCormick, ``Computability of global solutions to factorable nonconvex
  programs: {P}art {I}—convex underestimating problems,'' \emph{Mathematical
  Programming}, vol.~10, no.~1, pp. 147--175, 1976.

\end{thebibliography}

\end{document}